\newtheorem{thm}[subsection]{Theorem}
\newtheorem{lemma}[subsection]{Lemma}
\newtheorem{lem}[subsection]{Lemma}
\newtheorem{cor}[subsection]{Corollary}
\newtheorem{prop}[subsection]{Proposition}
\theoremstyle{definition}
\theoremstyle{remark}
\newtheorem{remark}[subsection]{Remark}
\newtheorem{rem}[subsection]{Remark}
\def\numequation{\addtocounter{subsubsection}{1}\begin{equation}}
\def\nummultline{\addtocounter{subsubsubsection}{1}\begin{multline}}
\def\anumequation{\addtocounter{subsection}{1}\begin{equation}}
\newif\iffinalrun
  \newcommand{\need}[1]{}
  \newcommand{\mar}[1]{}
  \newcommand{\need}[1]{{\tiny *** #1}}
  \newcommand{\mar}[1]{\marginpar{\raggedright\tiny #1}}
\newcommand{\C}{\CC}
\newcommand{\Q}{\QQ}
\newcommand{\R}{\RR}
\newcommand{\Z}{\ZZ}
\newcommand{\CC}{{\mathbb C}}
\newcommand{\QQ}{{\mathbb Q}}
\newcommand{\RR}{{\mathbb R}}
\newcommand{\ZZ}{{\mathbb Z}}
\renewcommand{\bf}{\ensuremath{\mathbf{f}}}
\newcommand{\cI}{{\mathcal I}}
\newcommand{\cM}{{\mathcal M}}
\newcommand{\cO}{{\mathcal O}}
\newcommand{\cX}{{\mathcal X}}
\DeclareMathOperator{\End}{End}
\DeclareMathOperator{\GL}{GL}
\DeclareMathOperator{\Spec}{Spec}
\newcommand{\into}{\hookrightarrow}
\newcommand{\onto}{\twoheadrightarrow}
\DeclarePairedDelimiter\floor{\lfloor}{\rfloor}
\begin{document}
\title{On the image of complex conjugation in certain Galois representations}

\author[A. Caraiani]{Ana Caraiani}\email{caraiani@princeton.edu}
\address{Department of Mathematics, Princeton University, Fine Hall,
Washington Rd., Princeton, NJ 08544, USA}
\author[B. V. Le Hung]{Bao V. Le Hung}\email{lhvietbao@googlemail.com} 
\address{Department of Mathematics,
University of Chicago,
5734 S. University Avenue,
Chicago, IL 60637, USA}
\thanks{A.C.\ was partially
  supported by the NSF Postdoctoral Fellowship DMS-1204465.}
\maketitle

\begin{abstract}
We compute the image of any choice of complex conjugation on the Galois representations associated to regular algebraic cuspidal automorphic representations and to torsion classes in the cohomology of locally symmetric spaces for $GL_n$ over a totally real field $F$. 
\end{abstract}

\section{Introduction}

The goal of this note is to describe the image of any choice of complex conjugation on the Galois representations associated to regular algebraic cuspidal automorphic representations and to (mod $p$) torsion classes in the cohomology of locally symmetric spaces for $GL_n$ over a totally real field $F$. Since any choice of complex conjugation has eigenvalues $\pm 1$, the key computation is to determine how many $+1$'s and how many $-1$'s occur; we do this by showing that their numbers differ by at most $1$. Our results are conditional on Arthur's work~\cite{arthur}.

In the case of regular algebraic cuspidal automorphic representations of $GL_n(\mathbb{A}_F)$ which are essentially self-dual this is known in almost all cases, due to Taylor~\cite{taylorcc} (when $n$ is odd and under the assumption that the corresponding Galois representation is irreducible) and Taibi~\cite{taibi} (all cases when $n$ is odd and most cases when $n$ is even). We note that in the essentially self-dual case when $n$ is odd, the corresponding Galois representation occurs in the \'etale cohomology of a certain Shimura variety. Taylor makes use of a geometric realization of complex conjugation and studies its action on the Hodge filtration of the Betti cohomology of this Shimura variety. Taibi uses $p$-adic interpolation techniques (eigenvarieties) to extend Taylor's result to almost all essentially self-dual cases. 

Recently, Harris, Lan, Taylor and Thorne used more geometric $p$-adic interpolation techniques in~\cite{HLTT} to construct Galois representations associated to regular algebraic cuspidal automorphic representations of $GL_n(\mathbb{A}_F)$ which do not need to be essentially self-dual. Later, Scholze gave a different construction in ~\cite{scholzetorsion}, still via $p$-adic interpolation, which also applies to torsion classes in the cohomology of the corresponding locally symmetric space.  

In this paper, we extend the result concerning the image of complex conjugation beyond the essentially self-dual case using the very techniques which led to the construction of the Galois representations we are interested in. We follow Scholze's approach rather than that of~\cite{HLTT}. Just as the construction of Galois representations for torsion classes in the case when $F$ is totally real, our result makes use of the transfer of a cusp form on $Sp_{2n}$ to $GL_{2n+1}$ and is therefore dependent on~\cite{arthur}, which is still conditional on the stabilization of the twisted trace formula.

Let $F$ be a totally real field and let $\pi$ be a cuspidal automorphic representation of $GL_n(\mathbb{A}_F)$ such that $\pi_\infty$ is regular $L$-algebraic. Let $S$ be a finite set of places of $F$, which contains all the places where $\pi$ is ramified, and let $G_{F,S}$ denote the Galois group of the maximal extension of $F$ unramified outside $S$. Then there exists a Galois representation \[\sigma_\pi: G_{F,S} \to GL_n(\bar{\mathbb {Q}}_p)\] which satisfies local-global compatibility at all finite places $v\not\in S$. More precisely, for every finite place $v\not \in S$, the Satake parameters of $\pi_v$ are the same as the eigenvalues of $\sigma_\pi(\mathrm{Frob}_v)$ (see, for example, Corollary V.4.2 of~\cite{scholzetorsion}). We prove the following

\begin{thm}\label{thm: cuspidal automorphic}
Let $\pi$ be a regular $L$-algebraic, cuspidal automorphic representation of $\GL_n(\mathbb{A}_F)$, with associated (p-adic) Galois representation $\sigma_\pi$. Let $c\in \mathrm{Gal}(\bar{F}/F)$ be a choice of complex conjugation. Then $\mathrm{tr}(\sigma_\pi)(c)=0$ if $n$ is even and $\mathrm{tr}(\sigma_\pi)(c)=\pm 1$ if $n$ is odd. 
\end{thm}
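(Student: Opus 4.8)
The strategy is to reduce the statement to the essentially self-dual case treated by Taylor and Taïbi, by transferring $\pi$ to a self-dual automorphic representation on a larger group and tracking what happens to complex conjugation. The key is the same transfer that underlies Scholze's construction of the Galois representation: one forms an auxiliary cuspidal automorphic representation of $\GL_{2n}(\mathbb{A}_F)$ or, better, one uses $\pi$ together with its dual. Concretely, consider the isobaric sum $\Pi = (\pi \boxplus \pi^\vee)$ on $\GL_{2n}(\mathbb{A}_F)$, twisted appropriately so that it is regular $L$-algebraic and polarized (conjugate self-dual after base change, or essentially self-dual over $F$ with the right sign). Then $\sigma_\Pi \cong \sigma_\pi \oplus \sigma_\pi^\vee(\text{twist})$, so $\mathrm{tr}(\sigma_\Pi(c)) = \mathrm{tr}(\sigma_\pi(c)) + \mathrm{tr}(\sigma_\pi^\vee(c)\cdot(\pm 1))$. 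Since $c$ has order $2$, its eigenvalues are $\pm 1$, and the eigenvalues of $\sigma_\pi^\vee(c)$ are the inverses (hence equal to) those of $\sigma_\pi(c)$; a short computation relates $\mathrm{tr}(\sigma_\Pi(c))$ to $\mathrm{tr}(\sigma_\pi(c))$ up to a controlled error.

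To make this run I would instead pass through $Sp_{2n}$ and $\GL_{2n+1}$, mirroring the construction itself: attach to a suitable self-dual lift the Galois representation landing in $\GSpin$ or in an orthogonal group, where the image of complex conjugation is already pinned down — an element of an orthogonal group over $\mathbb{R}$ conjugate to its inverse of order $2$ has a trace constrained by the signature, forcing the numbers of $+1$ and $-1$ eigenvalues to differ by at most the dimension of the relevant anisotropic kernel (which is $0$ or $1$). The essentially self-dual results of Taylor~\cite{taylorcc} (for odd rank, with irreducibility) and Taïbi~\cite{taibi} handle the transferred representation; the point is then to descend the conclusion "numbers of $\pm 1$ eigenvalues differ by at most $1$" from $\sigma_\Pi$ back to $\sigma_\pi$. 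Because $\sigma_\Pi$ decomposes as $\sigma_\pi$ plus a twist of its dual, and the twist contributes a complementary bound, the two bounds combine to give exactly: $\mathrm{tr}(\sigma_\pi(c)) = 0$ when $n$ is even and $\pm 1$ when $n$ is odd.

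The main obstacle is that this reduction to known essentially self-dual cases, while morally clean, requires an irreducibility (or at least a "no unexpected congruences") hypothesis in Taylor's theorem that need not hold for $\sigma_\pi$; and more seriously, $\sigma_\pi$ in general is \emph{not} essentially self-dual, so forming $\pi \boxplus \pi^\vee$ changes the problem rather than solving it — one must genuinely redo the argument for the non-self-dual $\sigma_\pi$. Thus the real plan, following Scholze, is to work directly in the cohomology of the locally symmetric space: realize $c$ geometrically as an automorphism of the relevant arithmetic manifold (or of a torus bundle / Shimura-type object mapping to it), compute its action on the cohomology where $\sigma_\pi$ appears via Hecke eigenvalues, and show by a Lefschetz-type count (the Euler characteristic of the fixed locus, or a comparison of Betti numbers weighted by the involution) that the trace of $c$ on the $\pi$-isotypic part is $0$ or $\pm 1$. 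The hardest step is controlling this action $p$-adically through Scholze's interpolation: the Galois representation is built as a limit over a tower, complex conjugation does not obviously act compatibly on the whole tower, and one must instead produce the constraint at finite level on a space where the self-dual transfer lives, then check the constraint survives the $p$-adic limit and pins down $\mathrm{tr}(\sigma_\pi(c))$ exactly rather than merely up to error — this is where the conditional input from~\cite{arthur} enters.
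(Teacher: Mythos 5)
Your proposal correctly locates the ambient structure (transfer through $\mathrm{Sp}_{2n}$ and $\GL_{2n+1}$, dependence on Arthur, the need for Ta\"ibi rather than Taylor because of irreducibility hypotheses), but it stops at the level of strategy and each of the three concrete plans you float has a gap that the paper's argument is specifically built to avoid.

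First, the $\GL_{2n}$ idea $\Pi=\pi\boxplus\pi^\vee$ does not give the sharp count. Since $c$ has order two, $\sigma_\pi^\vee(c)$ has the same eigenvalues as $\sigma_\pi(c)$, so $\mathrm{tr}\bigl(\sigma_\pi(c)\oplus\sigma_\pi^\vee(c)\chi_p^a(c)\bigr)$ is either $0$ (if $a$ is odd) or $2\,\mathrm{tr}(\sigma_\pi(c))$ (if $a$ is even). In the odd-twist case the constraint is vacuous; in the even-twist case the constraint on an even-dimensional self-dual representation forces the trace to be $0$, which is wrong for $n$ odd. What actually makes the argument work in the paper is the passage to dimension $2n+1$: Scholze's determinant for $G_0=\mathrm{Res}_{F/\Q}\mathrm{Sp}_{2n}$ is $(2n+1)$-dimensional, and on the $\GL_n$ locally symmetric space it specializes to $1\oplus\sigma_\psi\oplus\sigma_\psi^\vee$. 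The extra trivial summand and the odd total dimension are precisely why the constraint from Lemma~\ref{lem: classical cusp form} (for cusp forms on $G_0$, $\mathrm{tr}\,\sigma_x(c)=\pm1$, proved via Arthur's classification and Ta\"ibi) translates into the quadratic identity $(2\,\mathrm{tr}\,\sigma_\psi(c)+1)^2=1$ on the $\GL_n$ side, pinning $\mathrm{tr}\,\sigma_\psi(c)\in\{0,-1\}$.

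Second, your ``real plan'' --- realize $c$ geometrically on the arithmetic manifold and do a Lefschetz-type count --- is not what happens here, and indeed would re-introduce the difficulties you then worry about ($c$ acting compatibly on the $p$-adic tower). The paper never makes $c$ act on any geometric object. Instead, $c$ is just a Galois element, and the identity $(2\,\mathrm{tr}\,\tilde D(c)' +1)^2=1$ (where $\tilde D(c)'$ denotes the appropriate trace coefficient) is a polynomial identity in the target of a determinant. It holds at the ``bottom'' level of classical cusp forms on $G_0$ and automatically propagates through the continuous ring maps that build $\tilde D$ on $\mathbb{\tilde T}_{K^p}(m)$ and then on the Hecke algebra for $H^i_!$, $H^i_c$, $H^i$ of $X_K^{\GL_n}$. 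This sidesteps every compatibility worry you raise.

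Third, you omit the induction on $n$ through the boundary of the Borel-Serre compactification, which is genuinely needed. Not every system of Hecke eigenvalues lives in interior cohomology; the boundary cohomology is filtered by compactly supported cohomology of locally symmetric spaces for Levi subgroups $\prod_i\GL_{n_i}$ with $n_i<n$. The action of $\mathbb{T}_{F,S}$ on these pieces goes through the \emph{unnormalized} Satake transform, which introduces cyclotomic twists $\chi_p^{n_{i+1}+\cdots+n_k}$; since $\chi_p$ is odd, you must track how the sign of each contribution alternates as odd-dimensional blocks appear. This bookkeeping, absent from your sketch, is exactly how the paper shows the identity $(2\,\mathrm{tr}(c)+1)^2=1$ also holds for the determinant on boundary cohomology, hence on all of $H^i_c$, hence (mod $\pi^m$, then letting $m\to\infty$) for $\sigma_\pi$ itself.
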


\begin{remark}
This result can be regarded as part of local-global compatibility at the Archimedean primes $v$ of $F$: The Langlands parameter $\phi_v: W_{\R}=\C^\times\cup \C^\times j\to \mathrm{GL}_n(\C)$ of $\pi_v$ is expected to contain the data of the Hodge-Tate weights of $\sigma_\pi$ (via the weights of $\C^\times \subset W_{\R}$), while the image of complex conjugation $c_v$ should be controlled by the conjugacy class of $\phi_v(j)$. Under the regularity assumption, there are up to sign at most two choices of $\phi_v(j)$, corresponding exactly to the outcome stated in Theorem \ref{thm: cuspidal automorphic}. We also remark that in this setting, the Hodge-Tate part of the compatiblity will be a consequence of~\cite{Ila}.
\end{remark}
We prove the theorem by proving the same result for the Galois representations (or more precisely, determinants) associated to systems of Hecke eigenvalues occurring in the cohomology of locally symmetric spaces for $GL_n/F$. For a sufficiently small level $K\subset GL_n(\mathbb{A}_{F,f})$, define the locally symmetric space \[ X_K: = GL_n(F)\backslash\left(GL_n(\mathbb{A}_{F,f})/K \times GL_n(F\otimes_{\mathbb{Q}}\mathbb R)/ \mathbb R_{>0}K_\infty \right),\] where $K_\infty\subset  GL_n(F\otimes_{\mathbb{Q}}\mathbb R)$ is a maximal compact subgroup. Since $\pi$ is regular $L$-algebraic, $\pi':=\pi|\ \cdot|^{(n+1)/2}$ is regular $C$-algebraic, i.e. cohomological. Therefore, some twist of $\pi'$ by a character of order $2$ occurs in $H^i(X_K,\cM_{\xi,K})\otimes_{\mathbb {\bar{Z}}_p}\mathbb{C}$, for some algebraic representation $\xi$ of $\mathrm{Res}_{F/\mathbb{Q}}GL_n$ over $\mathbb{C}$, some integer $i$ and some sufficiently small level $K$.

Let \[\mathbb{T}_{F,S}:=\bigotimes_{v\not\in S}\mathbb{T}_v, \mathbb{T}_v=\mathbb{Z}_p[GL_n(F_v)//GL_n(\cO_{F_v})]\] be the abstract Hecke algebra. The representation $\pi'$ determines a homomorphism $\psi:\mathbb{T}_{F,S}\to \mathbb{\bar Z}_p$ (a \textit{system of Hecke eigenvalues}), which factors through some \[\mathrm{Im}(\mathbb{T}_{F,S}\to \End_{\mathbb{\bar Z}_p}(H^i(X_K,\cM_{\xi,K}))). \] (A priori, $\pi'$ determines a homomorphism of the Hecke algebra into $\mathbb{\bar Q}_p$, but local-global compatibility at places $v\not\in S$ guarantees that this actually lands inside $\mathbb{\bar Z}_p$). Let $\bar \psi: \mathbb{T}_{F,S} \to \mathbb{\bar{F}}_p$ be obtained from $\psi$ by composing with the natural map $\mathbb{\bar{Z}}_p\to \mathbb{\bar F}_p$. Since $\pi'$ occurs in $H^i(X_K, \cM_{\xi, K})\otimes_{\mathbb{\bar Z}_p}\mathbb{\bar Q}_p$, we see by Proposition 1.2.3 of ~\cite{ashstevens} that \[H^i(X_K, \cM_{\xi, K}\otimes_{\mathbb{\bar Z}_p}\mathbb{\bar F}_p)[\bar \psi]\not = 0.\]An argument using the Hochschild-Serre spectrial sequence (see, for example, the proof of Theorem V.4.1 of ~\cite{scholzetorsion}) tell us that \[H^i(X_K, \mathbb{\bar F}_p)[\bar \psi]\not = 0,\] where we have possibly replaced $K$ by a smaller compact open subgroup. Thus, the reduction mod $p$ of the system of Hecke eigenvalues corresponding to $\pi'$ occurs in the mod $p$ cohomology of $X_K$. There is also a mod $p^n$ version of the above picture.

\begin{thm}\label{thm: torsion}
Let $\psi$ be a system of Hecke eigenvalues occurring in $H^i(X_K, \mathbb{\bar F}_p)$ and let $\sigma_\psi$ be the corresponding Galois representation. Let $c\in \mathrm{Gal}(\bar{F}/F)$ be a choice of complex conjugation. Then $\sigma_\pi(c)$ has $+1$ as an eigenvalue with multiplicity $\lceil \frac{n-1}{2} \rceil$ and $-1$ as an eigenvalue with multiplicity $\lfloor \frac{n+1}{2} \rfloor $.
\end{thm}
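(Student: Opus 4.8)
The plan is to deduce Theorem~\ref{thm: cuspidal automorphic} from Theorem~\ref{thm: torsion}, as the excerpt indicates, and to prove the latter by realizing $\sigma_\psi$ inside the cohomology of a Shimura variety on which complex conjugation is geometric. Recall first the shape of Scholze's construction~\cite{scholzetorsion}: using Arthur's transfer~\cite{arthur}, the system $\overline\psi$ for $\GL_n/F$ --- via the self-dual isobaric lift on $\GL_{2n+1}/F$ and descent to $\Sp_{2n}/F$ --- occurs, through the Siegel parabolic, in the boundary cohomology of the Borel--Serre compactification of the locally symmetric space for $G':=\mathrm{Res}_{F/\QQ}\GSp_{2n}$. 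The associated symmetric space is Hermitian, so this locally symmetric space underlies a Siegel-type Shimura variety $\cS$ with canonical model over $\QQ$, and Scholze's machinery produces a $(2n+1)$-dimensional Galois pseudorepresentation $D$ of $G_{F,S}$, cut out of the cohomology of $\cS$ via the standard representation of the dual group $\SO_{2n+1}$, which on the Siegel--Levi boundary stratum specializes --- up to a cyclotomic twist --- to $\sigma_\psi\oplus\mathbf 1\oplus\sigma_\psi^\vee$. Hence it suffices to determine the characteristic polynomial of the order-two matrix $r(c)$, where $r$ underlies $D$: the statement for $\sigma_\psi(c)$ follows from a short computation using that $\sigma_\psi(c)$ and $\sigma_\psi^\vee(c)$ have the same eigenvalues, that the middle summand contributes one $+1$-eigenvalue, and a parity constraint that forces the asymmetric answer $(\lceil\frac{n-1}{2}\rceil,\lfloor\frac{n+1}{2}\rfloor)$ rather than its reverse.

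Next, since $\cS$ has a model over $\QQ$, fixing $\overline\QQ\hookrightarrow\CC$ equips $\cS(\CC)$ with an anti-holomorphic involution $F_\infty$, and the comparison isomorphism identifies the action of a complex conjugation $c\in\Gal(\overline\QQ/\QQ)$ on the étale cohomology of $\cS$ with $F_\infty^\ast$ on Betti cohomology; its restriction to $G_F$ is the $c$ of the statement. So we must compute the $\pm1$-multiplicities of $F_\infty^\ast$ on the $(2n+1)$-dimensional piece cut out by $\overline\psi$. In characteristic zero this is Taylor's argument~\cite{taylorcc}: since $\GL_n$ with cohomological coefficients always has regular infinitesimal character, the relevant motivic realization has Hodge numbers $h^{p,q}\le 1$ occurring for distinct $p$; $F_\infty^\ast$ interchanges $H^{p,q}$ with $H^{q,p}$, so is balanced off the diagonal, and acts on the at-most-one-dimensional diagonal piece by a sign read off from the polarization. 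This pins $r(c)$ down to multiplicities $\{n,n+1\}$, with the order determined by that sign, whence the claim.

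For a genuine torsion class the cleanest route is to compute uniformly in the coefficient ring: the Lefschetz number $\sum_i(-1)^i\,\mathrm{tr}\big(F_\infty^\ast\mid H^i(\cS(\CC),\overline{\mathbb F}_p)_{\overline\psi}\big)$ can be evaluated from the fixed locus $\cS(\RR)$, which is totally geodesic of half the dimension of $\cS(\CC)$, by a Gauss--Bonnet argument in the style of Harder; combined with the ordinary Euler characteristic of the same isotypic piece and the concentration of this cohomology in a controlled range of degrees (Scholze's vanishing results), this determines the individual traces, hence $\mathrm{tr}\, r(c)$ and $\det r(c)$, directly over $\overline{\mathbb F}_p$ (or $\ZZ/p^m$). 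An alternative is a congruence argument: $D$ is a $p$-adic limit of characteristic-zero determinants occurring in the cohomology of $\cS$, and $\mathrm{tr}\, r(c)$, $\det r(c)$ depend only on the pseudorepresentation, so one reduces to the characteristic-zero statement for all --- cuspidal and Eisenstein --- Hecke eigensystems of $\cS$, the Eisenstein ones being handled by induction on $n$.

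The main obstacle is the interface between the boundary torsion class for $\GL_n/F$ and the geometric complex conjugation of the interior Shimura variety: one must verify that $F_\infty$ preserves the Borel--Serre boundary and restricts compatibly to the $\GL_n/F$-stratum, and that the Hecke-equivariant exact sequence relating compactly supported cohomology of $\cS$ with its boundary cohomology is compatible with the $c$-action, so that $D$ genuinely records $F_\infty^\ast$ with the asserted signature. A secondary difficulty is extracting exact eigenvalue multiplicities from a trace: a pseudorepresentation recovers the characteristic polynomial of $r(c)$ in characteristic zero, but at $p=2$ the eigenvalues $\pm1$ collide in $\overline{\mathbb F}_2$, so there one must read the multiplicities off Scholze's $\ZZ/p^m$-coefficient construction and a suitable lift rather than arguing directly mod $p$.
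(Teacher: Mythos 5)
Your proposed main route has a genuine gap at its foundation. You assert that Scholze's $(2n+1)$-dimensional determinant $D$ is ``cut out of the cohomology of $\cS$ via the standard representation of the dual group $\SO_{2n+1}$,'' and you then propose to compute the action of a geometric complex conjugation $F_\infty^\ast$ on that cohomology. This is not how the determinant arises. The \'etale cohomology of the Siegel-type Shimura variety attached to $\GSp_{2n}$ carries Galois representations governed by the \emph{spin} representation of $\GSpin_{2n+1}$ (dimension $2^n$), not the $(2n+1)$-dimensional standard representation. Scholze's $D$ is built by $p$-adic interpolation from the Galois representations attached to classical cusp forms on $\Sp_{2n}(\mathbb A_F)$, which are themselves obtained via Arthur's transfer to $\GL_{2n+1}$ and the pre-existing construction for self-dual representations of $\GL_{2n+1}$ on \emph{unitary} Shimura varieties --- not from $H^\ast_{\mathrm{et}}(\cS)$. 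There is thus no $(2n+1)$-dimensional piece of $H^\ast_{\mathrm{et}}(\cS)$ on which $F_\infty^\ast$ acts with trace $\mathrm{tr}\,r(c)$, and both your Hodge-theoretic reduction to Taylor and your Lefschetz fixed-point computation on $\cS(\RR)$ cannot get off the ground. (The fixed-point route would in any case require controlling Hecke-isotypic Euler characteristics at the level of the stable trace formula, which is a separate and substantial project, and you do not explain how to pass from an alternating sum to the trace of $c$ in a single degree.)

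The paper's argument is also very different in spirit from your first route: it deliberately does not reprove Taylor's geometric analysis, but rather \emph{uses} Taylor's and Taibi's results on essentially self-dual $\GL_m$-representations as a black box. Concretely, Arthur's classification decomposes the Galois representation $\sigma_x$ of any cuspidal Hecke eigensystem $x$ on $\Sp_{2n}(\mathbb A_F)$ as $\bigoplus_i\bigoplus_{j}\sigma_i\chi_p^{-j}$ with each $\sigma_i$ attached to a self-dual cuspidal $\Pi_i$ on $\GL_{n_i}/F$; Taibi gives $\mathrm{tr}\,\sigma_i(c)\in\{0,\pm1\}$; a parity count shows at most one $i$ has $n_il_i$ odd, forcing $\mathrm{tr}\,\sigma_x(c)=\pm1$. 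This scalar identity (equivalently $(2\,\mathrm{tr}+1)^2=1$) is propagated through the $p$-adic interpolation to the torsion determinant. On specializing $\tilde D=\mathbf 1\oplus\sigma_\psi\oplus\sigma_\psi^\vee$ and solving $(1-X)^{2a+1}(1+X)^{2b}=(1\pm X)^n(1\mp X)^{n+1}$ over nonnegative integers $a,b$, the parity of $2a+1$ already forces the asymmetric multiplicities $(\lceil\frac{n-1}{2}\rceil,\lfloor\frac{n+1}{2}\rfloor)$ --- no further Hodge-theoretic sign determination is needed, contrary to what you suggest. Finally, the extension from interior to full cohomology of $X_K$ proceeds by induction over rational Levi subgroups of $\GL_n$ in the Borel--Serre boundary of $X_K$ itself, keeping track of unnormalized Satake transforms and the attendant cyclotomic twists; your ``alternative'' congruence sketch points in roughly this direction but identifies neither the correct inductive structure (Levis of $\GL_n$, not Eisenstein classes on $\cS$) nor the twist bookkeeping, both of which are essential to get the signs right.
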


The Galois representation $\sigma_\psi$ is obtained by specializing an $n$-dimensional continuous determinant, which is extracted from a $2n+1$-dimensional determinant which in turn interpolates the Galois representations associated to regular algebraic automorphic representations of $G_0:= \mathrm{Sp}_{2n}/F$. This essentially shows that $1\oplus \sigma_\psi\oplus \check{\sigma}_\psi$ is congruent to a Galois representation associated to (the transfer to $GL_{2n+1}$ of) a cuspidal automorphic representation of $G_0$. We then compute the characteristic polynomial of any choice of complex conjugation on the latter Galois representation (using Taibi's main theorem) and therefore determine the characteristic polynomial of any complex conjugation on the former. This gives Theorem~\ref{thm: torsion}. Adapting this for mod $p^n$ systems of Hecke eigenvalues then gives us Theorem~\ref{thm: cuspidal automorphic}.

\begin{rem}
Theorem \ref{thm: cuspidal automorphic} applies in particular to the essentially self-dual representations not covered by Taibi's theorem. However, our proof does not give a new proof of this result, as it was one of the inputs of our argument.  
\end{rem}

In practice, some technical complications arise. Theorem V.4.1 of ~\cite{scholzetorsion} guarantees that there is a determinant valued in the quotient of $\mathbb{T}_{F,S}$ which acts faithfully on $H^i(X_K, \mathbb{\bar F}_p)$, glued out of determinants valued in similar quotients acting on the interior cohomology $H^i_!(X_K, \mathbb{\bar F}_p)$ and on the cohomology of the boundary of the Borel-Serre compactification of $X_K$, $H^{i+1}(X^{\mathrm{BS}}_K, \mathbb{\bar F}_p)$. (We remark that, if we were merely interested in those $\bar \psi$ which are reductions of characteristic $0$ systems of Hecke eigenvalues corresponding to cuspidal automorphic representations, we could use the Hochschild-Serre spectral sequence before reducing to $\mathbb{\bar F}_p$ to ensure that the system of Hecke eigenvalues mod $p$ occurs in interior cohomology with trivial coefficients.) 

The determinant valued in the Hecke algebra acting on $H^i_!(X_K, \mathbb{\bar F}_p)$ is constructed by showing that the interior cohomology above contributes to the cohomology of the boundary of the Borel-Serre compactification of the locally symmetric space for $G:= \mathrm{Res}_{F/\mathbb{Q}}G_0$. The torsion cohomology of the locally symmetric space for $G$ is then related to classical cusp forms. Only this part is directly related to \textit{cuspidal} automorphic forms on $G_0$. We review the construction of this determinant in Section~\ref{completed cohomology}.

On the other hand, the determinant valued in the Hecke algebra acting on $H^{i+1}(X^{\mathrm{BS}}_K, \mathbb{\bar F}_p)$ is glued together out of the determinant for interior cohomology and determinants for locally symmetric spaces for $GL_n'$ with $n'<n$, whose cohomology contributes to the boundary cohomology of the Borel-Serre compactification of $X_K$. This allows an induction argument. We review the geometry of the boundary of the Borel-Serre compactification and explain the construction of the determinant in Section~\ref{boundary}. In section~\ref{complex conjugation}, we put all of this together to compute the characteristic polynomial of complex conjugation. 

\subsection{Acknowledgements}
We thank Frank Calegari for several remarks leading up to the material in Section~\ref{completed cohomology} and for comments on an earlier draft of this paper. We thank Toby Gee for raising the question of studying the image of complex conjugation to one of us and for comments on an earlier draft of this paper. We thank Sophie Morel for explaining the basics of compactifications of locally symmetric spaces. Part of this work was done while both of us were at the Institute for Advanced Studies and Princeton University, and we would like to thank both institutions for their hospitality. This material is also based upon work supported by the National Science Foundation under Grant No.0932078000 while the authors were in residence at the Mathematical Sciences Research Institute in Berkeley, California, during the Fall 2014 semester.
  
\section{Determinants and completed cohomology}
\label{completed cohomology}

In this section, we recall the construction due to Scholze of determinants (and, therefore, Galois representations) associated to systems of Hecke eigenvalues occurring in the completed cohomology of Shimura varieties for symplectic groups. 

Recall that $G_0:=\mathrm{Res}_{F/\mathbb{Q}}Sp_{2n}$. In this section, all Shimura varieties and Hecke algebras will be with respect to $G_0$. For $K^p\subset G_0(\mathbb A^{\infty,p})$ a sufficiently small compact open subgroup, let $\mathbb{T}_{K^p}$ be the abstract Hecke algebra over $\Z_p$ at level $K^{p,S}$, where $S$ is the set of places where $K^p$ is not hyperspecial. For $K_p\subset G_0(\mathbb{Q}_p)$ compact open, let $X_{K^pK_p}$ be the Shimura variety for $G_0$ of level $K^pK_p$ and let $X^*_{K^pK_p}$ be its minimal compactification. Let $\mathcal{O}$ be a finite extension of $\Z_p$, which we will use as our coefficients, and let $\pi$ be a uniformizer.

Let $d$ be the dimension of $X_{K^pK_p}$ and let $\mathbb{\tilde{T}}_{K^p}(m)$ be the inverse limit of the images $\mathbb{T}_{K_pK^p}(m)$ of $\mathbb{T}_{K^p}$ acting on $\oplus_{i=0}^{2d} H^i_c(X_{K^pK_p},\mathcal{O}/\pi^m)$ as $K_p$ becomes arbitrarily small. It is an inverse limit of finite discrete rings, and hence the inverse limit topology makes it a compact topological ring. Let $\tilde{H}^i_{c,K^p}(\mathcal{O}/\pi^m)=\varinjlim_{K_p} H^i_c(X_{K^pK_p},\mathcal{O}/\pi^m)$ denote the completed compactly supported cohomology with $\mathcal{O}/\pi^m$-coefficients. Let $\hat{\mathbb{T}}_{K^p}$ be the profinite completion of $\mathbb{T}_{K^p}$.Then $\mathbb{\tilde{T}}_{K^p}(m)$ is the image of $\hat{\mathbb{T}}_{K^p}$ in $\End_{\mathcal{O}/\pi^m}\left(\oplus_{i=0}^{2d}(\tilde{H}^i_{c,K^p}(\mathcal{O}/\pi^m))\right)$, and the action is continuous for the discrete topology on $\tilde{H}^i_{c,K^p}(\mathcal{O}/\pi^m)$. Note that if we use completed cohomology instead we would get the same quotient of $\hat{\mathbb{T}}_{K^p}$, by Poincar\'e duality.

Each $\mathbb{T}_{K_pK^p}(m)$ is a finite ring, and hence is the product of finitely many local rings, which are in bijection with its maximal ideals. A maximal ideal is the same data as a homomorphism $\mathbb{T}_{K_pK^p}(m)\to \overline{\mathbb{F}}_p$ (i.e. an $\bar{\mathbb{F}}_p$-system of Hecke eigenvalues), up to an automorphism of $\overline{\mathbb{F}}_p$. Each such system of eigenvalues is valued in a finite field. If $\mathfrak{m}$ is a maximal ideal of $\mathbb{T}_{K_pK^p}(m)$, then  $\oplus_{i=0}^{2d} H^i_c(X_{K^pK_p},\bar{\mathbb{{F}}}_p)_\mathfrak{m}\neq 0$, and (equivalently) $\oplus_{i=0}^{2d} H^i(X_{K^pK_p},\bar{\mathbb{{F}}}_p)[\mathfrak{m}]\neq 0$. If $H^i(X_{K^pK_p},\bar{\mathbb{{F}}}_p)[\mathfrak{m}]\neq 0$, we say that the system of Hecke eigenvalues corresponding to $\mathfrak{m}$ \textit{occurs} in $H^i_c(X_{K^pK_p},\bar{\mathbb{{F}}}_p)$. A non-zero cohomology class in this space is an eigenvector for $\mathbb{T}_{K^p}$, with the given system of Hecke eigenvalues, justifying the terminology. Observe that the maximal ideals of $\mathbb{T}_{K_pK^p}(m)$ and $\mathbb{T}_{K_pK^p}(1)$ are naturally in bijection with each other.

\begin{prop} There are finitely many systems of Hecke eigenvalues for $\mathbb{T}_{K_pK^p}(1)$ as $K_p$ varies. 
\end{prop}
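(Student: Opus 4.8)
The plan is to bound all these systems of Hecke eigenvalues at once by transporting them into a single fixed deep level at $p$, where admissibility of completed cohomology makes finiteness automatic. Fix once and for all a pro-$p$ open compact subgroup $I_0\subseteq G_0(\mathbb{Q}_p)$, e.g.\ a principal congruence subgroup. A system of Hecke eigenvalues for $\mathbb{T}_{K_pK^p}(1)$ amounts to a maximal ideal $\mathfrak{m}$ of $\mathbb{T}_{K^p}$ (of residue characteristic $p$) in the support of the finite-length $\mathbb{T}_{K^p}$-module $\bigoplus_i H^i_c(X_{K^pK_p},\overline{\mathbb{F}}_p)$ attached to the given $K_p$. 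Since each $X_{K^pK_p}$ is an oriented $2d$-dimensional manifold, Poincaré duality identifies $H^i_c(X_{K^pK_p},\overline{\mathbb{F}}_p)$ with $H^{2d-i}(X_{K^pK_p},\overline{\mathbb{F}}_p)^\vee$ as $\mathbb{T}_{K^p}$-modules, and a finite-dimensional module and its dual have the same support; so we may equivalently work with ordinary cohomology $\bigoplus_i H^i(X_{K^pK_p},\overline{\mathbb{F}}_p)$.

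The main step is to show that such an $\mathfrak{m}$ lies in the support of Emerton's completed cohomology $\widetilde{H}^b_{K^p}(\overline{\mathbb{F}}_p):=\varinjlim_{K'_p}H^b(X_{K^pK'_p},\overline{\mathbb{F}}_p)$ for some $b$. This is the one point that requires genuine input: in characteristic $p$ the restriction maps up the tower need not be injective, so a finite-level Hecke eigenclass may die in the colimit and one cannot simply view $\mathfrak{m}$ inside $\widetilde{H}^\bullet_{K^p}(\overline{\mathbb{F}}_p)$ by inspection. Instead I would invoke Emerton's Hochschild--Serre spectral sequence $H^a_{\mathrm{cont}}(K_p,\widetilde{H}^b_{K^p}(\overline{\mathbb{F}}_p))\Rightarrow H^{a+b}(X_{K^pK_p},\overline{\mathbb{F}}_p)$. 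It is $\mathbb{T}_{K^p}$-equivariant, because the prime-to-$S$ Hecke operators commute with the $K_p$-action on the tower; localisation at $\mathfrak{m}$ is exact and commutes with continuous group cohomology, so if $\mathfrak{m}$ is in the support of the abutment then it is in the support of some $H^a_{\mathrm{cont}}(K_p,\widetilde{H}^b_{K^p}(\overline{\mathbb{F}}_p))$, and therefore in the support of $\widetilde{H}^b_{K^p}(\overline{\mathbb{F}}_p)$.

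From here everything is formal. By Emerton's theorem $\widetilde{H}^b_{K^p}(\overline{\mathbb{F}}_p)$ is an admissible smooth $\overline{\mathbb{F}}_p$-representation of $G_0(\mathbb{Q}_p)$, nonzero only for $b$ in a bounded range, so each $\widetilde{H}^b_{K^p}(\overline{\mathbb{F}}_p)^{I_0}$ is finite-dimensional over $\overline{\mathbb{F}}_p$. Localising $\widetilde{H}^b_{K^p}(\overline{\mathbb{F}}_p)$ at $\mathfrak{m}$ yields a nonzero smooth $G_0(\mathbb{Q}_p)$-representation, which has nonzero $I_0$-invariants because $I_0$ is pro-$p$; as invariants commute with this exact localisation, $\mathfrak{m}$ lies in the support of the finite-dimensional space $\widetilde{H}^b_{K^p}(\overline{\mathbb{F}}_p)^{I_0}$. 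Hence every system of Hecke eigenvalues occurring at any level $K_p$ corresponds to a maximal ideal in the support of the single finite-dimensional commutative $\overline{\mathbb{F}}_p$-algebra $\mathrm{im}\bigl(\mathbb{T}_{K^p}\to\End_{\overline{\mathbb{F}}_p}(\bigoplus_b\widetilde{H}^b_{K^p}(\overline{\mathbb{F}}_p)^{I_0})\bigr)$, which, being finite-dimensional over a field, is Artinian and has only finitely many maximal ideals. This proves the proposition. The essential difficulty is concentrated in the middle step — moving a finite-level eigensystem into completed cohomology — which is exactly what Emerton's spectral sequence (equivalently, the fact that completed cohomology computes the tower up to $K_p$-cohomology) supplies; the rest is the standard ``admissible representation $\Rightarrow$ finitely many Hecke eigensystems'' argument after passing to $I_0$-invariants.
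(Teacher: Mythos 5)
Your proof is correct, but it takes a genuinely different route from the paper's. The paper's argument is an elementary \emph{finite-level descent}: it fixes $K(m) := \{g \in G_0(\Z_p) : g \equiv 1 \bmod p^m\}$ and uses the ordinary (finite) Hochschild--Serre spectral sequence for the covering $X_{K^pK(m+1)} \to X_{K^pK(m)}$ to show that every Hecke eigensystem occurring in $H^j(X_{K^pK(m+1)},\Fbar_p)$ already occurs in some $H^{j'}(X_{K^pK(m)},\Fbar_p)$ with $j' \le j$; the key observations are that the quotient group is an abelian $p$-group (so acts unipotently on any $\Fbar_p$-module, and every simultaneous eigenvector for the commuting Hecke action and the quotient action lives in the invariants) and a diagram chase showing that an eigenvector killed by a differential reappears in lower degree. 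Iterating reduces all eigensystems to a single fixed level. Your approach instead passes directly to the colimit: you move the eigensystem into Emerton's completed cohomology $\tilde{H}^b_{K^p}(\Fbar_p)$ via the $K_p$-cohomology spectral sequence, then invoke the \emph{admissibility} of completed cohomology plus the pro-$p$ fixed-vector trick to trap everything in the finite-dimensional space $\bigoplus_b \tilde{H}^b_{K^p}(\Fbar_p)^{I_0}$. The trade-off is clear: the paper's argument is self-contained (it needs only finite group cohomology), while yours imports Emerton's admissibility theorem but is more conceptual and scales immediately to other situations where admissibility is known. Both uses of a Hochschild--Serre-type spectral sequence are doing the same job --- controlling the interaction between varying $K_p$-level and Hecke support --- but in opposite directions (step-by-step descent versus one jump to the limit). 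One small point: you should note that the localized spectral sequence converges because $K_p$, being a compact $p$-adic analytic group, has finite cohomological dimension and $\tilde{H}^b_{K^p}(\Fbar_p)$ vanishes for $b > 2d$; this is what lets you deduce that a nonzero abutment forces a nonzero localized $E_2$-term.
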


\begin{proof} Define $K(m):=\left\{ g\in G_0(\mathbb{Z}_p)|g\equiv 1\pmod{p^m}\right\}$. It suffices to see that every system of Hecke eigenvalues occurring in $H^i(X_{K^pK(m+1)},\mathbb{\bar F}_p)$ also occurs in some $H^{i'}(X_{K^pK(m)},\mathbb{\bar F}_p)$ with $i'\leq i$, whenever $m\geq 1$. For this, we use the Hochschild-Serre spectral sequence:
\[E_2^{ij}=H^i(K(m+1)/K(m), H^j(X_{K^pK(m+1)},\mathbb{\bar F}_p)) \Rightarrow H^{i+j}(X_{K^pK(m)},\mathbb{\bar F}_p),\] which is  $\mathbb{T}_{K^p}$-equivariant.
First, note that $K(m+1)/K(m)$ is an abelian $p$-group, so any element of $K(m+1)/K(m)$ has $1$ as its only eigenvalue. Second, note that $\mathbb{T}_{K^p}[K(m+1)/K(m)]$ is commutative. Therefore, every system of  $\mathbb{T}_{K^p}$-eigenvalues that occurs in $ H^{j}(X_{K^pK(m+1)},\mathbb{\bar F}_p)$ also occurs in $H^0(K(m+1)/K(m),  H^{j}(X_{K^pK(m+1)},\mathbb{\bar F}_p))$. 

If the eigenvector survives in the $E_\infty$ page of the Hochschild-Serre spectral sequence, we are done. Otherwise, a diagram chase and Proposition 1.2.2 of ~\cite{ashstevens} tell us that the system of Hecke eigenvalues has to occur in some $H^{i'}(K(m+1)/K(m), H^{j'}(X_{K^pK(m+1)},\mathbb{\bar F}_p))$ with $j'<j$. But then Lemma~\ref{lem: system of Hecke eigenvalues in group cohomology} and the argument above tells us it must also occur in $H^0(K(m+1)/K(m), H^{j'}(X_{K^pK(m+1)},\mathbb{\bar F}_p))$. We then proceed as above until we possibly reach $j'=0$, in which case the system of Hecke eigenvalues occurs in $H^0(X_{K^pK(m)},\mathbb{\bar F}_p)$. 
\end{proof}

\begin{lemma}\label{lem: system of Hecke eigenvalues in group cohomology} Every system of $\mathbb{T}_{K^p}$-eigenvalues occurring in \[H^{i}(K(m+1)/K(m), H^{j}(X_{K^pK(m+1)},\mathbb{\bar F}_p))\] also occurs in $H^{j}(X_{K^pK(m+1)},\mathbb{\bar F}_p)$.
\end{lemma}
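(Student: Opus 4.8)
The plan is to treat this as a soft d\'evissage: everything will come down to the fact that whether a given system of Hecke eigenvalues occurs in a finite-length $\mathbb{T}_{K^p}$-module is unaffected by passing to direct summands and to subquotients. Write $\Gamma := K(m+1)/K(m)$ and $M := H^j(X_{K^pK(m+1)}, \mathbb{\bar F}_p)$; this is a finite-dimensional $\mathbb{\bar F}_p$-vector space carrying commuting actions of the finite group $\Gamma$ and of the Hecke algebra $\mathbb{T}_{K^p}$, which is commutative --- indeed $\mathbb{T}_{K^p}[\Gamma]$ is, as used above --- so that $M$ is of finite length over $\mathbb{T}_{K^p}$.

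The main step is to run this d\'evissage by localizing at the relevant maximal ideal. Recall that a finite-length $\mathbb{T}_{K^p}$-module $V$ decomposes as the direct sum $V = \bigoplus_{\mathfrak n} V_{\mathfrak n}$ of its localizations at the finitely many maximal ideals $\mathfrak n$ of $\mathbb{T}_{K^p}$ in its support, and that a system of Hecke eigenvalues --- i.e.\ a maximal ideal $\mathfrak m$ --- occurs in $V$ exactly when $V_{\mathfrak m} \neq 0$ (if $V_{\mathfrak m} \neq 0$ then its socle is a nonzero submodule annihilated by $\mathfrak m$, and conversely $V[\mathfrak m] \subseteq V_{\mathfrak m}$). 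Apply this with $V = M$: since $H^i(\Gamma, -)$ is additive, and since each $H^i(\Gamma, M_{\mathfrak n})$ is again killed by a power of $\mathfrak n$ (because $M_{\mathfrak n}$ is, and $H^i(\Gamma, -)$ is functorial), the identity $H^i(\Gamma, M) = \bigoplus_{\mathfrak n} H^i(\Gamma, M_{\mathfrak n})$ is precisely the decomposition of $H^i(\Gamma, M)$ into localizations. Hence if $\mathfrak m$ occurs in $H^i(\Gamma, M)$ then $H^i(\Gamma, M_{\mathfrak m}) \neq 0$, which forces $M_{\mathfrak m} \neq 0$ (as $H^i(\Gamma, 0) = 0$), i.e.\ $\mathfrak m$ occurs in $M$, which is the claim.

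I do not expect any real obstacle; the lemma is formal. If one prefers to avoid localization and stay in the style of the proof of the Proposition above, one can instead fix a resolution $P_\bullet \to \mathbb{\bar F}_p$ of the trivial $\mathbb{\bar F}_p[\Gamma]$-module by finite free $\mathbb{\bar F}_p[\Gamma]$-modules, observe that $H^\bullet(\Gamma, M)$ is computed by the complex $\Hom_{\mathbb{\bar F}_p[\Gamma]}(P_\bullet, M)$, whose terms are $\mathbb{T}_{K^p}$-equivariantly finite direct sums of copies of $M$, so that $H^i(\Gamma, M)$ is a $\mathbb{T}_{K^p}$-subquotient of some $M^{\oplus n_i}$, and then run the same d\'evissage. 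The only points deserving care are the commutativity of $\mathbb{T}_{K^p}$ and the finite-dimensionality of $M$: these are exactly what make ``occurs in'' insensitive to direct summands and subquotients, and that insensitivity is the whole content of the argument.
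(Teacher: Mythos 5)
Your proof is correct, and it takes a somewhat different route from the paper's. The paper introduces the two image Hecke algebras $\mathbb{T}_1 = \mathrm{Im}\bigl(\mathbb{T}_{K^p} \to \End(M)\bigr)$ and $\mathbb{T}_2 = \mathrm{Im}\bigl(\mathbb{T}_{K^p} \to \End(H^i(\Gamma,M))\bigr)$ and observes that $\mathbb{T}_2$ is a quotient of $\mathbb{T}_1$ (by functoriality, anything killing $M$ kills $H^i(\Gamma,M)$); a system of eigenvalues occurring in $H^i(\Gamma,M)$ therefore gives a maximal ideal of $\mathbb{T}_1$, and since $\mathbb{T}_1$ is a finite-dimensional $\overline{\mathbb{F}}_p$-algebra acting faithfully on $M$, every maximal ideal of $\mathbb{T}_1$ lies in the support of $M$, whence Nakayama gives $M[\mathfrak{m}] \neq 0$. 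Both arguments rest on exactly the same two inputs --- commutativity of $\mathbb{T}_{K^p}$ and finite-dimensionality of $M$ over $\overline{\mathbb{F}}_p$ --- and both amount to the containment $\mathrm{Supp}\,H^i(\Gamma,M) \subseteq \mathrm{Supp}\,M$; the paper tracks this through annihilators (so $\mathrm{Supp}\,H^i(\Gamma,M) \subseteq \Spec\mathbb{T}_2 \subseteq \Spec\mathbb{T}_1 = \mathrm{Supp}\,M$), whereas you track it through the primary decomposition of $M$ itself and additivity of $H^i(\Gamma,-)$. Your route has the small advantage of not needing a separate appeal to Nakayama at the end. One point you use implicitly and could spell out: the decomposition $M = \bigoplus_{\mathfrak n} M_{\mathfrak n}$ is a decomposition of $\Gamma$-modules, not just of $\mathbb{T}_{K^p}$-modules, because the $\Gamma$-action commutes with $\mathbb{T}_{K^p}$ and hence with the idempotents in $\mathbb{T}_1$ that cut out the $M_{\mathfrak n}$; this is what licenses applying $H^i(\Gamma,-)$ termwise.
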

\begin{proof} Define \[\mathbb{T}_1:= \mathrm{Im}\left(\mathbb{T}_{K^p}\to \End_{\mathbb{\bar F}_p} \left(H^{j}(X_{K^pK(m+1)},\mathbb{\bar F}_p)\right) \right)\] and  \[\mathbb{T}_2:= \mathrm{Im}\left(\mathbb{T}_{K^p}\to \End_{\mathbb{\bar F}_p} \left(H^i(K(m+1)/K(m), H^{j}(X_{K^pK(m+1)},\mathbb{\bar F}_p))\right) \right).\] Then $\mathbb{T}_2$ is a quotient of $\mathbb{T}_1$, so any system of Hecke eigenvalues occurring in $H^i(K(m+1)/K(m), H^{j}(X_{K^pK(m+1)},\mathbb{\bar F}_p))$ determines a maximal ideal of $\mathbb{T}_1$. Now, it suffices to notice that, since $H^{j}(X_{K^pK(m+1)},\bar{\mathbb{F}}_p)$ is a finite-dimensional $\mathbb{\bar F}_p$-vector space, every maximal ideal of $\mathbb{T}_1$ is in the support of $H^{j}(X_{K^pK(m+1)},\mathbb{\bar F}_p)$. Using Nakayama's lemma and Proposition 1.2.2 of ~\cite{ashstevens}, we see that every maximal ideal of $\mathbb{T}_1$ determines a system of Hecke eigenvalues occurring in $H^{j}(X_{K^pK(m+1)},\mathbb{\bar F}_p)$. 
\end{proof}

\begin{cor} The ring $\mathbb{\tilde T}_{K^p}(m)$ is a product of finitely many complete profinite local rings, each with finite residue field. 
\end{cor}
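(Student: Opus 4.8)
The plan is to use the description of $\mathbb{\tilde T}_{K^p}(m)$ as the inverse limit $\varprojlim_{K_p}\mathbb{T}_{K_pK^p}(m)$ of finite commutative rings (with surjective transition and projection maps, as in its construction), and to combine the finiteness proved in the Proposition with an idempotent decomposition. Recall that each $\mathbb{T}_{K_pK^p}(m)$, being a finite commutative hence Artinian ring, is canonically a finite product of local rings indexed by its maximal ideals, and that (as observed just before the Proposition) those maximal ideals are in natural bijection with the maximal ideals of $\mathbb{T}_{K_pK^p}(1)$, i.e.\ with the $\overline{\mathbb F}_p$-systems of Hecke eigenvalues occurring at level $K_pK^p$, each valued in a finite field. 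The first point is that the maximal spectra stabilize along the tower: for $K_p'\subseteq K_p$ the transition map $\mathbb{T}_{K_p'K^p}(m)\twoheadrightarrow\mathbb{T}_{K_pK^p}(m)$ is surjective and so induces an injection of maximal spectra, whence as $K_p$ shrinks these finite sets only grow, while by the Proposition their cardinalities are uniformly bounded. So there is a level $K_p^0$ and a finite set $\mathfrak m_1,\dots,\mathfrak m_r$ of maximal ideals such that for every $K_p\subseteq K_p^0$ the maximal ideals of $\mathbb{T}_{K_pK^p}(m)$ are exactly the pullbacks of $\mathfrak m_1,\dots,\mathfrak m_r$, with residue fields $k_1,\dots,k_r$ independent of $K_p$.

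Next I would lift the idempotents. For $K_p'\subseteq K_p\subseteq K_p^0$ the surjection $\mathbb{T}_{K_p'K^p}(m)\twoheadrightarrow\mathbb{T}_{K_pK^p}(m)$ induces a bijection on maximal spectra, and its kernel, being an ideal in a finite product of local rings, is a product of ideals of the factors; since a quotient of a local ring is local, the surjection must respect the two product decompositions factor by factor, hence carries the idempotent cutting out the factor at $\mathfrak m_j$ over $K_p'$ to the one over $K_p$. These therefore assemble to orthogonal idempotents $e_1,\dots,e_r\in\mathbb{\tilde T}_{K^p}(m)=\varprojlim_{K_p\subseteq K_p^0}\mathbb{T}_{K_pK^p}(m)$ (the limit is unchanged upon passing to this cofinal subsystem) with $e_1+\dots+e_r=1$, the relations being inherited from the finite quotients. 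Hence $\mathbb{\tilde T}_{K^p}(m)=\prod_{j=1}^r e_j\mathbb{\tilde T}_{K^p}(m)$, where $e_j\mathbb{\tilde T}_{K^p}(m)=\varprojlim_{K_p}(\mathbb{T}_{K_pK^p}(m))_{\mathfrak m_j}$.

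Finally I would identify each factor. It is an inverse limit of finite local rings along surjective, hence local, transition maps with constant residue field $k_j$, so it is profinite and therefore complete; and it is local, since the kernel $\mathfrak n_j$ of the well-defined reduction map $e_j\mathbb{\tilde T}_{K^p}(m)\to k_j$ is maximal, while every element outside $\mathfrak n_j$ has unit image in each finite local quotient and is thus itself a unit, its inverse being assembled from the compatible inverses at finite level. As $k_j$ is a finite field, this exhibits $\mathbb{\tilde T}_{K^p}(m)$ as a finite product of complete profinite local rings with finite residue fields, proving the corollary.

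I expect the only real subtlety to be bookkeeping: pinning down that $\mathbb{\tilde T}_{K^p}(m)$ genuinely is this inverse limit with surjective transition maps, and that the finite local decompositions are compatible along the tower. There is no new mathematical content beyond the finiteness of the set of Hecke eigensystems established in the Proposition; everything else is formal commutative algebra of profinite rings.
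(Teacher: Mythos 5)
Your proof is correct and takes essentially the same approach as the paper's: you use $\mathbb{\tilde T}_{K^p}(m)=\varprojlim_{K_p}\mathbb{T}_{K_pK^p}(m)$, observe that the Proposition bounds the number of local factors in each finite Artinian quotient so that the maximal spectra and local decompositions stabilize along the tower, and pass to the limit. The paper states this in one sentence; you supply the idempotent-compatibility and the verification that each limiting factor is profinite local with finite residue field, which is exactly the bookkeeping the paper leaves implicit.
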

\begin{proof}
$\mathbb{\tilde T}_{K^p}(m)=\varprojlim_{K_p} \mathbb{T}_{K_pK^p}(m)$, and by the previous proposition, the transition maps are eventually surjective maps between products of local Artinian rings which the same number of factors. This gives the factorization of $\mathbb{\tilde T}_{K^p}(m)$ into a product of projective limits of local finite rings.
\end{proof}

The upshot of this corollary is that we can now work after localizing at each of the finitely many maximal ideals of $\mathbb{\tilde T}_{K^p}(m)$ individually. Let $\mathfrak{m}_i$ for $i=1,\dots,N$ be the finitely many maximal ideals of  $\mathbb{\tilde T}_{K^p}(m)$. Let  $\mathbb{T}_{K^pK_p,k, \mathfrak{m}_i}$ be the image of  $\mathbb{T}_{K^p}$ acting on the finite-dimensional space of cusp forms of level $K_pK^p$ and weight $m_0j$ with $1\leq j\leq k$ and $m_0$ a sufficiently large integer:
\[\mathbb{T}_{K^pK_p,k, \mathfrak{m}_i}:= \mathrm{Im}\left(\mathbb{ T}_{K^p} \to \End_{\C_p}\left(\oplus_{j=1}^{k} H^0(\cX^*_{K^pK_p}, \omega^{\otimes m_0j}_{K^pK_p}\otimes \cI)_{\mathfrak{m}_i}\right) \right). \] (Here, $\cX^*_{K^pK_p}$ is the adic space associated to the complex algebraic variety $X^*_{K^pK_p}$, based chaged from $\C$ to $\C_p$.)

Let $\mathbb{T}^{\mathrm{cl}}_{K^p, \mathfrak{m}_i}$ be the maximal quotient of $\hat{\mathbb{T}}_{K^p}$ over which all maps $\hat{\mathbb{T}}_{K^p}\to \mathbb{T}_{K^pK_p,k, \mathfrak{m}_i}$ factor. Note that $\mathbb{T}^{\mathrm{cl}}_{K^p, \mathfrak{m}_i}$ is the image of $\hat{\mathbb{T}}_{K^p}$ inside $\varprojlim_{k,K_p}\mathbb{T}_{K^pK_p,k, \mathfrak{m}_i}$, hence is compact. Furthermore it is dense (because its composition with the projection to each $\mathbb{T}_{K^pK_p,k,\mathfrak{m}_i}$ is surjective), hence $\mathbb{T}^{cl}_{K^p, \mathfrak{m}_i}=\varprojlim_{k,K_p}\mathbb{T}_{K^pK_p,k, \mathfrak{m}_i}$.
\begin{prop}\label{prop: classical determinants}
There is a continuous $(2n+1)$-dimensional determinant $\tilde{D}$ of $G_{F,S}$ with values in $\mathbb{T}_{K^p, \mathfrak{m}_i}^{\mathrm{cl}}$, such that 
\[\tilde{D}(1-X\cdot \mathrm{Frob}_v) = 1 - T_{1,v}X +T_{2,v}X^2 -\dots +(-1)^{2n+1}T_{n,v}X^{2n+1}.\]
\end{prop}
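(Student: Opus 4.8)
The plan is to build $\tilde D$ from the Galois representations attached to the ``classical points'' of $\mathbb{T}^{\mathrm{cl}}_{K^p,\mathfrak{m}_i}$ and then to glue them, using Chenevier's formalism of determinants together with Chebotarev density. Concretely, the construction is carried out at each finite level $(k,K_p)$ and then passed to the limit.

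Fix $k$ and $K_p$. The space $\bigoplus_{j=1}^k H^0(\cX^*_{K^pK_p},\omega^{\otimes m_0 j}_{K^pK_p}\otimes\cI)_{\mathfrak{m}_i}$ is, by GAGA, a space of classical scalar-weight Siegel cusp forms on $G_0$ over $\Cp$, which forms a semisimple module over the Hecke operators away from $S$ (Petersson inner product); hence $\mathbb{T}_{K^pK_p,k,\mathfrak{m}_i}$ is reduced, and being finite over the algebraically closed field $\Cp$ it is a product $\prod_\alpha \Cp$ indexed by the finitely many systems of Hecke eigenvalues occurring. Each projection $\hat{\mathbb{T}}_{K^p}\to\Cp$ is the Hecke eigensystem of a cuspidal automorphic representation $\varpi_\alpha$ of $G_0(\mathbb{A})$, of regular infinitesimal character because $m_0$ is large. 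Now comes the one substantial input: by Arthur's endoscopic classification for $\Sp_{2n}$ \cite{arthur}, $\varpi_\alpha$ transfers to a self-dual, orthogonal-type, regular $L$-algebraic automorphic representation $\Pi_\alpha$ of $\GL_{2n+1}(\mathbb{A}_F)$ (an isobaric sum of cuspidal pieces), and by the known constructions of Galois representations attached to such representations of general linear groups there is a continuous semisimple $\rho_\alpha\colon G_{F,S}\to\GL_{2n+1}(\Qbar_p)$ such that for all $v\notin S$ the characteristic polynomial $\det(1-X\rho_\alpha(\Frob_v))$ is the image under $\hat{\mathbb{T}}_{K^p}\to\Cp$ of $1-T_{1,v}X+\dots+(-1)^{2n+1}T_{n,v}X^{2n+1}$ — the relevant point being that the standard $L$-parameter of $(\Pi_\alpha)_v$ is obtained from the Satake parameter of $(\varpi_\alpha)_v$ via the inclusion of the dual group $\SO_{2n+1}(\CC)$ of $\Sp_{2n}$ into $\GL_{2n+1}(\CC)$.

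Setting $D_\alpha:=\det\circ\rho_\alpha$, each $D_\alpha$ is a continuous $(2n+1)$-dimensional determinant of $G_{F,S}$ over $\Cp$, and $\tilde D^{(k,K_p)}:=\prod_\alpha D_\alpha$ is such a determinant over $\mathbb{T}_{K^pK_p,k,\mathfrak{m}_i}$ whose value on $1-X\Frob_v$ is the image of the displayed Hecke polynomial. As $(k,K_p)$ vary these determinants are compatible under the transition maps of the inverse system: both sides of the defining identity transform compatibly, and since $\{\Frob_v : v\notin S\}$ is dense in $G_{F,S}$ by Chebotarev and the characteristic-polynomial coefficients of a determinant are continuous, agreement on Frobenii forces agreement of the full determinants. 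A compatible family of continuous determinants over a cofiltered system of topological rings glues to a continuous determinant over the limit (part of Chenevier's formalism, reviewed in this setting in \cite{scholzetorsion}); applying this over $\varprojlim_{k,K_p}\mathbb{T}_{K^pK_p,k,\mathfrak{m}_i}=\mathbb{T}^{\mathrm{cl}}_{K^p,\mathfrak{m}_i}$ produces $\tilde D$, and it satisfies $\tilde D(1-X\Frob_v)=1-T_{1,v}X+\dots+(-1)^{2n+1}T_{n,v}X^{2n+1}$ because this holds after projection to each finite level.

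The entire weight of the argument sits in the classical points: the existence of the $(2n+1)$-dimensional $\rho_\alpha$ depends on Arthur's classification (hence on the stabilisation of the twisted trace formula) and on the construction of Galois representations for isobaric sums of self-dual regular algebraic cuspidal automorphic representations of general linear groups — this is exactly where the result becomes conditional. Everything after that is soft, with one caveat: because $2n+1$ need not be invertible in the residue fields that occur, it is essential to work with determinants rather than with Taylor-style pseudocharacters; with that in place the gluing goes through without friction.
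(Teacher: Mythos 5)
Your argument is essentially the paper's own proof, which says only that the determinant exists at each finite level $(k,K_p)$ (being built from Galois representations attached to classical Siegel cuspforms on $G_0$ via Arthur's transfer) and then passes to the limit $\mathbb{T}^{\mathrm{cl}}_{K^p,\mathfrak{m}_i}=\varprojlim_{k,K_p}\mathbb{T}_{K^pK_p,k,\mathfrak{m}_i}$. You have simply filled in the finite-level construction (semisimplicity via Petersson, reduction to classical points, Arthur's classification, taking $D_\alpha=\det\circ\rho_\alpha$, Chebotarev for compatibility) and the gluing step, all of which match what the paper leaves implicit.
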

\begin{proof}
This follows from the existence of determinants with the required property valued in $\mathbb{T}_{K^pK_p,k, \mathfrak{m}_i}$ (constructed from the Galois representations associated to classical cuspforms on $G_0$), and $\mathbb{T}^{\mathrm{cl}}_{K^p, \mathfrak{m}_i}=\varprojlim_{k,K_p}\mathbb{T}_{K^pK_p,k, \mathfrak{m}_i}$.
\end{proof}
The proposition shows that $\mathbb{T}_{K^p, \mathfrak{m}_i}^{\mathrm{cl}}$ receives a surjection from a Galois pseudo-deformation ring, and hence it is complete local Noetherian.

Now let $\mathbb{T}_1:=\mathrm{Im}\left(\mathbb{T}_{K^p,\mathfrak{m}_i} \to \mathbb{T}_{K^p, \mathfrak{m}_i}^{\mathrm{cl}}\right)$. Then $\mathbb{T}_1$ is a dense local subring of $\mathbb{T}_{K^p,\mathfrak{m}_i}^{\mathrm{cl}}$. This shows that $\mathbb{T}_{K^p,\mathfrak{m}_i}^{\mathrm{cl}}$ is the completion of $\mathbb{T}_1$ with respect to the sequence of ideals $J_n=\mathfrak{m}_i^n\mathbb{T}_{K^p,\mathfrak{m}_i}^{\mathrm{cl}}\cap \mathbb{T}_1$.
\begin{lem} There exists a surjection $\mathbb{T}_{K^p,\mathfrak{m}_i}^{\mathrm{cl}}\twoheadrightarrow \tilde{\mathbb{T}}_{K^p,\mathfrak{m}_i}(m)$ respecting Hecke operators with the same name.
\end{lem}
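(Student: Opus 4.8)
The strategy is to show that the surjection $\hat{\mathbb{T}}_{K^p}\twoheadrightarrow\tilde{\mathbb{T}}_{K^p,\mathfrak{m}_i}(m)$ factors through $\mathbb{T}^{\mathrm{cl}}_{K^p,\mathfrak{m}_i}$. Since $\mathbb{T}^{\mathrm{cl}}_{K^p,\mathfrak{m}_i}$ is by definition the maximal quotient of $\hat{\mathbb{T}}_{K^p}$ through which every map $\hat{\mathbb{T}}_{K^p}\to\mathbb{T}_{K^pK_p,k,\mathfrak{m}_i}$ factors, such a factorization exists as soon as every $t\in\hat{\mathbb{T}}_{K^p}$ which acts by zero on all the spaces $\bigoplus_{j=1}^{k}H^0(\cX^*_{K^pK_p},\omega^{\otimes m_0 j}\otimes\cI)_{\mathfrak{m}_i}$ (for every $k$ and every $K_p$) acts by zero on $\bigoplus_i\tilde{H}^i_{c,K^p}(\cO/\pi^m)_{\mathfrak{m}_i}$. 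The induced map $\mathbb{T}^{\mathrm{cl}}_{K^p,\mathfrak{m}_i}\to\tilde{\mathbb{T}}_{K^p,\mathfrak{m}_i}(m)$ is then automatically a map of $\hat{\mathbb{T}}_{K^p}$-algebras, hence respects the Hecke operators of the same name, with surjectivity inherited from that of $\hat{\mathbb{T}}_{K^p}\twoheadrightarrow\tilde{\mathbb{T}}_{K^p,\mathfrak{m}_i}(m)$. So everything reduces to exhibiting $\bigoplus_i\tilde{H}^i_{c,K^p}(\cO/\pi^m)_{\mathfrak{m}_i}$, as a $\hat{\mathbb{T}}_{K^p}$-module, as a subquotient of a direct sum of reductions modulo $\pi^m$ of (integral structures on) the spaces of classical cusp forms $H^0(\cX^*_{K^pK_p},\omega^{\otimes k}\otimes\cI)_{\mathfrak{m}_i}$ of sufficiently large and divisible weight $k$.

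This is exactly the content of Scholze's comparison in~\cite{scholzetorsion}, which I would follow closely. By Poincar\'e duality one may replace compactly supported completed cohomology by completed cohomology without changing the relevant quotient of $\hat{\mathbb{T}}_{K^p}$ (as already noted), and pass to the minimal compactifications $X^*_{K^pK_p}$. Working at infinite level on the perfectoid minimal compactification $\cX^*_{K^p}=\varprojlim_{K_p}\cX^*_{K^pK_p}$, equipped with the Hodge-Tate period map $\pi_{\HT}\colon\cX^*_{K^p}\to\mathrm{Fl}$ to the relevant (fixed, finite-dimensional) flag variety, Scholze's theorem relates $\tilde{H}^i_{c,K^p}(\cO/\pi^m)$, up to Hecke-equivariant almost isomorphism, to the cohomology of $\mathrm{Fl}$ with coefficients in $R\pi_{\HT,*}(\cO^+/\pi^m)$, and identifies that sheaf, on the relevant locus of $\mathrm{Fl}$, with completed pullbacks of the automorphic line bundles $\omega^{\otimes k}$. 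Because we work modulo $\pi^m$, the fake Hasse invariant on $\cX^*_{K^p}$ lets one raise the weight $k$ by any sufficiently divisible integer without altering cohomology modulo $\pi^m$; feeding this into the spectral sequence computing the (finitely many, since $\mathrm{Fl}$ is fixed) cohomology groups of $\mathrm{Fl}$ in terms of these line bundles exhibits $\bigoplus_i\tilde{H}^i_{c,K^p}(\cO/\pi^m)_{\mathfrak{m}_i}$ as a subquotient of a direct sum of reductions mod $\pi^m$ of the spaces $H^0(\cX^*_{K^pK_p},\omega^{\otimes k}\otimes\cI)_{\mathfrak{m}_i}$. Since the weights $m_0 j$ are cofinal among the sufficiently large and divisible weights, any $t\in\hat{\mathbb{T}}_{K^p}$ killing all the $\mathbb{T}_{K^pK_p,k,\mathfrak{m}_i}$ kills this subquotient, giving the desired factorization.

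The difficulty is in the bookkeeping rather than in any new idea. The points to watch are: carrying the almost-mathematics and the relevant spectral sequences — Hochschild-Serre to reduce the level, the Leray spectral sequence for $\pi_{\HT}$, and the coherent-cohomology computation on $\mathrm{Fl}$ — far enough that the faithfulness of the action of $\tilde{\mathbb{T}}_{K^p,\mathfrak{m}_i}(m)$ on the module in question is preserved, and in particular controlling the positive-degree contributions (here $\omega$ is ample on the minimal compactification, but the needed bounds must be made uniform in $K_p$, which is precisely why one works on the fixed flag variety); making sure the fake Hasse invariant argument genuinely lands in cuspidal, not merely classical, sections, which is where the boundary of $X^*_{K^pK_p}$ and the ideal sheaf $\cI$ enter; and verifying that the Shimura varieties attached to $G_0=\mathrm{Res}_{F/\mathbb{Q}}\Sp_{2n}$ — these are Hilbert-Siegel varieties, of PEL type, after passing to the similitude group if necessary — meet the hypotheses needed for the perfectoidness of $\cX^*_{K^p}$ and the construction of $\pi_{\HT}$. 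All of these are addressed in~\cite{scholzetorsion}, so the lemma follows by assembling that material.
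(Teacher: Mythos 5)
Your reduction --- show that $\hat{\mathbb{T}}_{K^p}\twoheadrightarrow\tilde{\mathbb{T}}_{K^p,\mathfrak{m}_i}(m)$ factors through $\mathbb{T}^{\mathrm{cl}}_{K^p,\mathfrak{m}_i}$ --- is the right goal, and you correctly pinpoint that the substance is Scholze's comparison between completed cohomology and torsion cusp forms (Theorem IV.3.1 of \cite{scholzetorsion}). But your route to the factorization differs from the paper's and is a bit stronger than what that theorem actually hands you. You want to realize $\bigoplus_i\tilde{H}^i_{c,K^p}(\mathcal{O}/\pi^m)_{\mathfrak{m}_i}$ directly as a $\hat{\mathbb{T}}_{K^p}$-module subquotient of a direct sum of reductions of cuspidal sections, and then quote that any $t\in\hat{\mathbb{T}}_{K^p}$ annihilating all those sections annihilates the subquotient. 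The paper instead proceeds in two stages. First, it extracts from the proof of Theorem IV.3.1 only that the image $\mathbb{T}_2$ of the (ordinary, not profinitely completed) Hecke algebra $\mathbb{T}_{K^p,\mathfrak{m}_i}$ in $\tilde{\mathbb{T}}_{K^p,\mathfrak{m}_i}(m)$ is a quotient of its image $\mathbb{T}_1$ in $\mathbb{T}^{\mathrm{cl}}_{K^p,\mathfrak{m}_i}$; this is the Hecke-algebra-level content one can read off Scholze's construction. Second, since $\mathbb{T}^{\mathrm{cl}}_{K^p,\mathfrak{m}_i}$ is identified as the $J_n$-adic completion of $\mathbb{T}_1$, the paper upgrades $\mathbb{T}_1\twoheadrightarrow\mathbb{T}_2$ to a map out of $\mathbb{T}^{\mathrm{cl}}_{K^p,\mathfrak{m}_i}$ by a continuity estimate: one shows that $J_n$ kills $H^0(\mathfrak{X}^*_{K^pK_p},(\omega^{\mathrm{int}})^{\otimes k}\otimes\mathfrak{I})_{\mathfrak{m}_i}/\pi^m$ for $n$ large relative to $k,K_p,m$, and then (again by the comparison) that $J_n$ kills $\oplus_i H^i_c(X_{K^pK_p},\mathcal{O}/\pi^m)$ for $n$ large relative to $K_p,m$, which is exactly what is needed for continuity into $\tilde{\mathbb{T}}_{K^p,\mathfrak{m}_i}(m)=\varprojlim_{K_p}\mathbb{T}_{K_pK^p,\mathfrak{m}_i}(m)$.

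The two arguments are very close in spirit and both outsource the real content to Scholze, but yours requires the comparison to be upgraded to a literal Hecke-module subquotient statement for the direct limit $\tilde{H}^i_{c,K^p}(\mathcal{O}/\pi^m)$, which involves threading an (infinite) direct sum through almost-isomorphisms, Leray/\v{C}ech, and Hochschild--Serre without losing the subquotient structure. That is plausible but needs to be phrased carefully, and it is more than is strictly needed. The paper's two-step form (surjection at the level of $\mathbb{T}_1$, then the explicit $J_n$-continuity) sidesteps it: the surjection only needs to be known on the dense, finitely generated $\mathbb{T}_1$, and the extension to the completion is a separate, quantitative vanishing statement at each finite level $K_p$. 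If you want your version to be airtight, you should make precise the sense in which the Hecke action on the complex computing $R\Gamma(\mathrm{Fl}, R\pi_{\HT*}\mathcal{O}^+/\pi^m)$ factors through the cuspidal Hecke algebras, and make sure the almost-isomorphisms don't break the annihilation argument; alternatively, observe that it suffices to compare kernels on $\mathbb{T}_{K^p}$ and then add the continuity step, which is what the paper does.
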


\begin{proof}

Let $\mathbb{T}_2:=\mathrm{Im}\left(\mathbb{T}_{K^p,\mathfrak{m}_i} \to \tilde{\mathbb{T}}_{K^p,\mathfrak{m}_i}(m)\right)$. 
The proof of Theorem IV.3.1 of ~\cite{scholzetorsion} shows that  $\mathbb{T}_2$ is a quotient of  $\mathbb{T}_1$ (we remark that, up to localization at $\mathfrak{m}_i$, $\mathbb{T}_2$ is called $\mathbb{T}^{\mathrm{cl}}$ there). A Hecke operator in $J_n$ acts as 0 on $H^0(\mathfrak{X}^*_{K^pK_p}, (\omega^{int}_{K^pK_p})^{\otimes k}\otimes \mathfrak{I})_{\mathfrak{m}_i}/\pi^m$ (where $\mathfrak{X}$ and $\omega^{int}$ are as in loc.cit.) if $n$ is large compared to $k$, $K_p$, $m$. It then follows that $J_n$ acts act as 0 on $\oplus_{i=0}^{2d} H^i_c(X_{K^pK_p},\mathcal{O}/\pi^m)$ if $n$ is large compared to $K_p$ and $m$.
It follows that the the composition $\mathbb{T}_1\onto \mathbb{T}_2 \into \tilde{T}_{K^p}(m)_{\mathfrak{m}_i}$ is continous for the toplogy defined by $J_n$, and thus induces a map $\mathbb{T}_{K^p,\mathfrak{m}_i}^{\mathrm{cl}}\onto \tilde{\mathbb{T}}_{K^p,\mathfrak{m}_i}(m)$ which is necessarily a surjection.

\end{proof}

This fact together with Proposition \ref{prop: classical determinants} immediately gives the following, which is roughly equivalent to Corollary V.1.11 of ~\cite{scholzetorsion}. 
\begin{thm}\label{thm: det for completed cohomology} There is a continuous $(2n+1)$-dimensional determinant $\tilde{D}$ of $G_{F,S}$ with values in $\mathbb{\tilde{T}}_{K^p}(m)$, such that 
\[\tilde{D}(1-X\cdot \mathrm{Frob}_v) = 1 - T_{1,v}X +T_{2,v}X^2 -\dots +(-1)^{2n+1}T_{n,v}X^{2n+1}.\]
\end{thm}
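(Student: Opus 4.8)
The plan is to assemble the determinant from pieces already constructed: localize at each maximal ideal of $\tilde{\mathbb{T}}_{K^p}(m)$, build the determinant there, and then glue. By the corollary above, $\tilde{\mathbb{T}}_{K^p}(m)=\prod_{i=1}^N\tilde{\mathbb{T}}_{K^p,\mathfrak{m}_i}(m)$ is a finite product of complete profinite local rings, the factors being precisely the localizations at the maximal ideals $\mathfrak{m}_1,\dots,\mathfrak{m}_N$. So it is enough to produce, for each $i$, a continuous $(2n+1)$-dimensional determinant $\tilde{D}_i$ of $G_{F,S}$ valued in $\tilde{\mathbb{T}}_{K^p,\mathfrak{m}_i}(m)$ with the stated characteristic-polynomial identity, and then to recognize that a determinant valued in a finite product ring amounts to a tuple of determinants valued in the factors.

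For each $i$, Proposition~\ref{prop: classical determinants} furnishes a continuous $(2n+1)$-dimensional determinant of $G_{F,S}$ with values in $\mathbb{T}^{\mathrm{cl}}_{K^p,\mathfrak{m}_i}$ satisfying $\tilde{D}(1-X\cdot\mathrm{Frob}_v)=1-T_{1,v}X+\dots+(-1)^{2n+1}T_{n,v}X^{2n+1}$. I would push this determinant forward along the surjection $\mathbb{T}^{\mathrm{cl}}_{K^p,\mathfrak{m}_i}\onto\tilde{\mathbb{T}}_{K^p,\mathfrak{m}_i}(m)$ supplied by the preceding lemma. Since that map is a continuous homomorphism of topological rings, the composite is again a continuous determinant; and since the lemma's map carries each Hecke operator to the operator of the same name, the pushed-forward determinant still satisfies the characteristic-polynomial identity, now with $T_{j,v}$ denoting the Hecke operators in $\tilde{\mathbb{T}}_{K^p,\mathfrak{m}_i}(m)$. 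This yields the $\tilde{D}_i$.

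Finally, I would glue. A $(2n+1)$-dimensional determinant of $G_{F,S}$ valued in $\prod_i\tilde{\mathbb{T}}_{K^p,\mathfrak{m}_i}(m)$ is the same datum as a collection of such determinants valued in the individual factors, because the notion of a determinant (in the sense of Chenevier) commutes with finite products of coefficient rings: the polynomial laws defining a determinant can be checked and reassembled component by component. Applying this to $(\tilde{D}_1,\dots,\tilde{D}_N)$ produces a single determinant $\tilde{D}$ valued in $\tilde{\mathbb{T}}_{K^p}(m)$; it is continuous because $\tilde{\mathbb{T}}_{K^p}(m)$ carries the product topology and each $\tilde{D}_i$ is continuous; and the characteristic-polynomial formula holds because it holds in each factor and, under $\tilde{\mathbb{T}}_{K^p}(m)=\prod_i\tilde{\mathbb{T}}_{K^p,\mathfrak{m}_i}(m)$, the operator $T_{j,v}$ is the tuple of its images. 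The only point requiring care is this gluing step, namely verifying that determinants globalize across a finite product decomposition and that the identification of Hecke operators is consistent through the intermediate quotients; but this is formal given the lemma and the corollary above, and requires no further geometric input.
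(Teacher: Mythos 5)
Your proposal is correct and follows the same route as the paper: factor by factor, push forward the classical determinant from Proposition~\ref{prop: classical determinants} along the surjection $\mathbb{T}^{\mathrm{cl}}_{K^p,\mathfrak{m}_i}\twoheadrightarrow\tilde{\mathbb{T}}_{K^p,\mathfrak{m}_i}(m)$ of the preceding lemma, then reassemble over the finite product decomposition of $\tilde{\mathbb{T}}_{K^p}(m)$. The paper compresses this into the single sentence ``This fact together with Proposition~\ref{prop: classical determinants} immediately gives the following,'' and your writeup simply makes explicit the (formal) gluing of determinants across a finite product of coefficient rings.
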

\begin{cor}
 $\mathbb{\tilde{T}}_{K^p}(m)$ is a product of finitely many complete local Noetherian rings.
\end{cor}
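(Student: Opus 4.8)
The plan is to bootstrap from the two facts just established: the Corollary asserting that $\mathbb{\tilde T}_{K^p}(m)$ is a product of finitely many complete profinite local rings with finite residue field, and the Lemma producing, for each maximal ideal $\mathfrak{m}_i$, a surjection $\mathbb{T}_{K^p,\mathfrak{m}_i}^{\mathrm{cl}}\onto \tilde{\mathbb{T}}_{K^p,\mathfrak{m}_i}(m)$.

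First I would record that the factorization from that Corollary is $\mathbb{\tilde T}_{K^p}(m)=\prod_{i=1}^N \tilde{\mathbb{T}}_{K^p,\mathfrak{m}_i}(m)$, where $\mathfrak{m}_1,\dots,\mathfrak{m}_N$ are the maximal ideals and $\tilde{\mathbb{T}}_{K^p,\mathfrak{m}_i}(m)$ is the localization at $\mathfrak{m}_i$; that localizing commutes with the inverse limit $\varprojlim_{K_p}\mathbb{T}_{K_pK^p}(m)$ defining $\mathbb{\tilde T}_{K^p}(m)$ is precisely what the proof of that Corollary shows, the transition maps being eventually surjective and respecting the stable decomposition into local pieces. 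So the task reduces to proving that each local factor $\tilde{\mathbb{T}}_{K^p,\mathfrak{m}_i}(m)$ is Noetherian.

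Next I would invoke the remark following Proposition~\ref{prop: classical determinants}: the continuous $(2n+1)$-dimensional determinant $\tilde D$ of $G_{F,S}$ valued in $\mathbb{T}_{K^p,\mathfrak{m}_i}^{\mathrm{cl}}$ exhibits $\mathbb{T}_{K^p,\mathfrak{m}_i}^{\mathrm{cl}}$ as a quotient of the universal deformation ring of the associated residual determinant. Since $G_{F,S}$ satisfies the finiteness condition needed to apply Chenevier's theory of determinants, this universal ring is complete local Noetherian, hence so is its quotient $\mathbb{T}_{K^p,\mathfrak{m}_i}^{\mathrm{cl}}$. Applying the surjection from the preceding Lemma, and the fact that a quotient of a complete local Noetherian ring is again complete local Noetherian, each $\tilde{\mathbb{T}}_{K^p,\mathfrak{m}_i}(m)$ is complete local Noetherian; taking the finite product over $i=1,\dots,N$ finishes the argument. (Alternatively one can phrase this directly via Theorem~\ref{thm: det for completed cohomology} and the pseudodeformation ring surjecting onto $\mathbb{\tilde T}_{K^p}(m)$ factor by factor.)

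I do not expect a serious obstacle: the statement is essentially a formal consequence of the determinant of Theorem~\ref{thm: det for completed cohomology} together with Noetherianity of determinant deformation rings. The only point deserving a line of care is matching the data — namely, checking that the local factors of the inverse limit $\mathbb{\tilde T}_{K^p}(m)$ are exactly the rings $\tilde{\mathbb{T}}_{K^p,\mathfrak{m}_i}(m)$ that appear in the preceding Lemma, i.e. that the decomposition into local pieces is compatible with the transition maps in $K_p$ — but this is already contained in the proof of the earlier Corollary.
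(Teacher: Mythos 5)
Your proposal is correct and tracks exactly the logic the paper intends: factor $\mathbb{\tilde T}_{K^p}(m)$ into its local pieces $\tilde{\mathbb{T}}_{K^p,\mathfrak{m}_i}(m)$ via the earlier Corollary, use the Lemma's surjection $\mathbb{T}_{K^p,\mathfrak{m}_i}^{\mathrm{cl}}\twoheadrightarrow \tilde{\mathbb{T}}_{K^p,\mathfrak{m}_i}(m)$, and conclude Noetherianity from the remark that $\mathbb{T}_{K^p,\mathfrak{m}_i}^{\mathrm{cl}}$ is a quotient of a pseudo-deformation ring (Chenevier). The paper gives no explicit proof, treating the Corollary as an immediate byproduct of Theorem~\ref{thm: det for completed cohomology} together with the surrounding lemmas, which is the very chain you spell out — including the alternative phrasing you note, of running the pseudo-deformation argument factor by factor directly against the determinant of that theorem.
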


\section{The boundary of the Borel-Serre compactification} \label{boundary}

In this section, we go back to studying the locally symmetric spaces for general linear groups. Assume that the level $K\subset GL_n(\mathbb{A}_{F,f})$ is neat, which can be achieved by increasing the level at $p$, as in the introduction. Let $X_K^\mathrm{BS}$ be the Borel-Serre compactification of $X_K=X_K^{\GL_n}$ (suppressing the field $F$ in the notation). This is a real manifold with corners, which is a compactification of $X_K$ such that the inclusion $X_K \hookrightarrow X_K^\mathrm{BS}$ is a homotopy equivalence. We have the excision long exact sequence for compactly supported cohomology: 
\[\dots \to H^i_c(X_K, \bar{\mathbb{F}}_p) \to H^i(X_K,\bar{\mathbb{F}}_p) \to H^i(X_K^\mathrm{BS}\setminus X_K,\bar{\mathbb{F}}_p)\to\dots\]
Theorem~\ref{thm: det for completed cohomology} allows us to understand Galois representations associated to systems of Hecke eigenvalues which contribute to the interior cohomology $H^i_!(X_K,\bar{\mathbb{F}}_p)$. To extend this to all systems of Hecke eigenvalues in $H^i(X_K,\bar{\mathbb{F}}_p)$, we also need to account for those systems of Hecke eigenvalues occurring in $H^i(X_K^\mathrm{BS}\setminus X_K,\bar{\mathbb{F}}_p)$. In this subsection, we recall the geometry of the Borel-Serre and reductive Borel-Serre compactifications and use induction to construct a determinant valued in (a quotient of) the Hecke algebra acting on $H^i(X_K^\mathrm{BS}\setminus X_K,\bar{\mathbb{F}}_p)$. We follow the original construction in ~\cite{borelserre} as well as the exposition in ~\cite{GHM}.

Note that the group $\mathrm{Res}_{F/\mathbb{Q}}GL_n$ has a non-trivial $\mathbb{Q}$-split torus in its center.  Let $H$ be the group 
\[(^0(\mathrm{Res}_{F/\mathbb{Q}}GL_n))^0:=\bigcap_\chi \ker \chi,\] where $\chi$ runs over all rationally defined characters of $\mathrm{Res}_{F/\mathbb{Q}}GL_n$.
The locally symmetric spaces $X_K$ can be identified with finitely many disjoint copies of generalized locally symmetric spaces for the algebraic group $H$ in the sense of ~\cite{GHM}. This identification follows from the definition in ~\cite{borelserre} and Paragraph 3.4 of ~\cite{GHM}. The boundary of the Borel-Serre compactification of $X_K$ has a stratification which runs over finitely many conjugacy classes of rational parabolic subgroups $P_H\subseteq H$. 

Every rational parabolic subgroup $P_H\subseteq H$ has a decomposition $P_H=L_{H}U_{H}$, where $L_{H}$ is the Levi quotient and $U_{H}$ is the unipotent radical of $P_H$. Let \[M_{H}=^0(L_{H}):=\bigcap_\chi \ker \chi^2,\] where $\chi$ runs over all rationally defined characters of $L_{H}$. The group of real points of every rational parabolic subgroup $P_H\subseteq H$ has a Langlands decomposition ${P_H}(\mathbb{R})=M_{H}(\mathbb{R})A_{H}(\mathbb{R})U_{H}(\mathbb{R})$. Now we can describe the stratification of the Borel-Serre compactification more precisely: the stratum corresponding to a parabolic subgroup $P_H$ can be identified with the locally symmetric space:
\[X^{P_H}:=P_H(\mathbb{Q})\backslash\left(P_H(\mathbb{A}_{f})/K^{P_H}_f \times P_H(\mathbb R)/A_H(\mathbb{R})K^{P_H}_\infty  \right)\] where $K^{P_H}_f:=K\cap P_H(\mathbb{A}_{f})\subset P_H(\mathbb{A}_{f})$ is a compact open subgroup and $K^{P_H}_\infty\subset M_{H}(\mathbb{R})$ is a maximal compact subgroup. This space is a nilmanifold bundle over the locally symmetric space associated to the Levi quotient $L_{H}$ of $P_H$
\[X^{L_H}:= L_H(\mathbb{Q})\backslash\left(L_H(\mathbb{A}_{f})/K^L_f \times L_H(\mathbb R)/A_H(\mathbb{R})K^{P_H}_\infty \right)\]
 where $K^L_f\subset L_H(\mathbb{A}_{f})$ is the image of $K^{P_H}_f$ under the projection $P_H(\mathbb{A}_{f}) \to L_H(\mathbb{A}_{f})$ and is a compact open subgroup. The fibers of this nilmanifold bundle are isomorphic to $(U_H(\mathbb{Q})\cap K^{P_H}_f)\backslash U_H(\mathbb{R})$, as in the proof of Lemma V.2.2 of ~\cite{scholzetorsion}. We note that $X^L_{K^L_f}$ can be identified with a locally symmetric space for the connected component of the identity in $M_H$, in the sense of (7.2.2) of ~\cite{GHM} (we can go to the connected component of the identity since $K^{P_H}_\infty\subset M_H(\mathbb{R})$ is a maximal compact subgroup). 
 
We now reinterpret $X^{L_H}$ as a product of generalized locally symmetric spaces. Each parabolic subgroup $P_H\subset H$ is the intersection with $H$ of a parabolic subgroup $P$ of $\mathrm{Res}_{F/\mathbb{Q}}GL_n$. Assume that $P$ has Levi quotient isomorphic to $L:=\prod_i \mathrm{Res}_{F/\mathbb{Q}}GL_{n_i}$, with $\sum_i n_i=n$. From the definition of the locally symmetric space $X^{L_H}$, we see that \[X^{L_H}\simeq L(\mathbb{Q})\backslash\left(L(\mathbb{A}_{f})/K^{L}_f \times L(\mathbb R)/A_{L}(\mathbb{R})K_{L} \right),\] where $K_{L}\subset {L}(\mathbb{R})$ is a maximal compact subgroup.

Moreover, the latter obviously decomposes as a product of locally symmetric spaces associated to the $\mathrm{Res}_{F/\mathbb{Q}}GL_{n_i}$. Therefore, we can compute the compactly supported cohomology of a stratum corresponding to $P$ using the Leray-Serre spectral sequence of a fibration and the Kunneth formula for compactly supported cohomology. 

For $\mathrm{Res}_{F/\mathbb{Q}}P$ running over $\mathbb{Q}$-conjugacy classes of maximal parabolic subgroups of $\mathrm{Res}_{F/\mathbb{Q}}GL_n$,  denote the corresponding stratum by $X^P_{K^P_f}$. We have an open immersion $X^{P}_{K^{P}_f}\hookrightarrow X^\mathrm{BS}_K\setminus X_K$, which leads to the long exact sequence for cohomology with compact support
\[\dots \to H^i_c(X^{P}_{K^{P}_f},\bar{\mathbb{F}}_p)\to H^i(X^\mathrm{BS}_K\setminus X_K,\bar{\mathbb{F}}_p ) \to  H^i(X^\mathrm{BS}_K\setminus (X_K \cap X^{P}_{K^{P}_f}), \bar{\mathbb{F}}_p) \to \dots.\] Thus $H^i(X^\mathrm{BS}_K\setminus X_K, \bar{\mathbb{F}}_p)$ is an extension between subquotients of $H^i_c(X^{P}_{K^{P}_f},\bar{\mathbb{F}}_p)$ and $H^i(X^\mathrm{BS}_K\setminus (X_K \cap X^{P}_{K^{P}_f}),\bar{\mathbb{F}}_p)$. After removing the strata corresponding to all maximal parabolic subgroups this way, we can continue this process for smaller parabolic subgroups.  The corresponding stratum will be open in what is left of the boundary and we can iterate the process above. We see that $H^i(X^\mathrm{BS}_K\setminus X_K, \bar{\mathbb{F}}_p)$ has a filtration whose graded pieces are subquotients of $H^i_c(X^{P}_{K^{P}_f},\bar{\mathbb{F}}_p)$ where $P$ runs through conjugacy classes of rational parabolic subgroups $\mathrm{Res}_{F/\mathbb{Q}}P$ of $\mathrm{Res}_{F/\mathbb{Q}}GL_n$. Furthermore, the length of the filtration depends only on $G$ and not on $K$.

Let $\pi : X^P_{K^P_f}\to X^L_{K^L_f}$ be the projection map. It is proper. The Leray spectral sequence tells us that \[E_2^{i,j}:=H^i_c(X^L_{K^L_f},R^j\pi_*\bar{\mathbb{F}}_p) \implies H^{i+j}_c(X^P_{K^P_f}, \bar{\mathbb{F}}_p).\] Therefore, $H^{i+j}_c(X^P_{K^P_f}, \bar{\mathbb{F}}_p)$ has a filtration whose graded pieces are subquotients of $H^i_c (X^L_{K^L_f}, R^j\pi_*\bar{\mathbb{F}}_p)$, and whose length is bounded in terms of $G$.

Let $\mathbb{T}^P_{F,S}:=\mathbb{Z}_p[P(\mathbb{A}_F^{S})//K^{P,S}]$ and $\mathbb{T}^L_{F,S}:=\mathbb{Z}_p[L(\mathbb{A}_F^{S})//K^{L,S}]$. We have maps $\mathbb{T}_{F,S}\to\mathbb{T}^P_{F,S}$ by restriction and $\mathbb{T}^P_{F,S}\to\mathbb{T}^L_{F,S}$ by integration along unipotent fibres. The composite is the unnormalized Satake transform. We obtain the following lemma, which is an analogue of Lemma V.2.3 of ~\cite{scholzetorsion}.
\begin{lemma}\label{lem: unnormalized Satake} The Leray spectral sequence  \[E_2^{i,j}=H^i_c(X^L_{K^L_f},R^j\pi_*\mathbb{\bar{F}}_p) \implies H^{i+j}_c(X^P_{K^P_f}, \mathbb{\bar{F}}_p)\] is equivariant for the action of $\mathbb{T}_{F,S}$ given by unnormalized Satake composed with the action of $\mathbb{T}^L_{F,S}$ on the $E_2$-page and by the natural action (via restriction to $\mathbb{T}^P_{F,S}$) on $H^{i+j}_c(X^P_{K^P_f}, \mathbb{\bar{F}}_p)$.
\end{lemma}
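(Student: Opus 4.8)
The plan is to follow the strategy of Lemma V.2.3 of~\cite{scholzetorsion}, reducing the assertion to a place-by-place analysis of Hecke correspondences. For $v\notin S$ the action of $\mathbb{T}_v$ on cohomology is given by the finite étale correspondences $X_K\xleftarrow{p_1}X_{K''}\xrightarrow{p_2}X_K$ attached to double cosets in $GL_n(\mathcal{O}_{F_v})\backslash GL_n(F_v)/GL_n(\mathcal{O}_{F_v})$, obtained from right translation by $g\in GL_n(F_v)\subset GL_n(\mathbb{A}_F^{S})$. The $GL_n(\mathbb{A}_F^{S})$-action on the tower $\{X_K\}$ extends to the Borel--Serre compactifications compatibly with the stratification (part of the functoriality of $X_K^{\mathrm{BS}}$, see~\cite{borelserre} and~\cite{GHM}), so each such correspondence restricts to a correspondence on the stratum $X^P_{K^P_f}$, and is compatible with the nilmanifold fibration $\pi\colon X^P_{K^P_f}\to X^L_{K^L_f}$ in the sense that it lies over the induced correspondence on $X^L_{K^L_f}$. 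It then suffices to identify these induced correspondences on $X^P$ and on $X^L$ in terms of double cosets for $P(F_v)$ and $L(F_v)$.

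First I would treat the stratum $X^P$. Since $K_v=GL_n(\mathcal{O}_{F_v})$ is hyperspecial one has $K_v\cap P(F_v)=P(\mathcal{O}_{F_v})$, and by the Iwasawa decomposition $GL_n(F_v)=P(F_v)\cdot GL_n(\mathcal{O}_{F_v})$. A direct coset bookkeeping then shows that the restriction to $X^P$ of the correspondence attached to $K_vgK_v$ is the sum of the correspondences attached to the $P(\mathcal{O}_{F_v})$-double cosets occurring in $P(F_v)\cap K_vgK_v$; equivalently, the action of $\mathbb{T}_{F,S}$ on $H^\ast_c(X^P_{K^P_f},\bar{\mathbb{F}}_p)$ factors through the restriction map $\mathbb{T}_{F,S}\to\mathbb{T}^P_{F,S}$. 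It is important here that, working with compactly supported cohomology and honest (un-normalized) pushforward along the covering maps, no modulus factor $\delta_P^{1/2}$ intervenes --- this is exactly why the transform appearing is the \emph{unnormalized} Satake transform, and why the statement is meaningful with $\bar{\mathbb{F}}_p$-coefficients, where half-integral powers of $q_v$ need not exist.

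Next I would analyze the proper map $\pi$, whose fibers are the nilmanifolds $(U_H(\mathbb{Q})\cap K^{P_H}_f)\backslash U_H(\mathbb{R})$. The $P(\mathcal{O}_{F_v})$-correspondence on $X^P$ maps onto an $L(\mathcal{O}_{F_v})$-correspondence on $X^L$, and the multiplicity with which a given $L(\mathcal{O}_{F_v})$-double coset occurs --- together with the way the fiber components are permuted --- is computed by integration along the unipotent fibers, i.e. by the map $\mathbb{T}^P_{F,S}\to\mathbb{T}^L_{F,S}$. Since $\pi$ is proper we may identify $R\pi_!\bar{\mathbb{F}}_p$ with $R\pi_\ast\bar{\mathbb{F}}_p$, so the sheaves $R^j\pi_\ast\bar{\mathbb{F}}_p$ inherit a correspondence structure lying over the one on $X^L$ compatibly with this double-coset description; the Leray spectral sequence is functorial for this morphism of correspondences, hence $\mathbb{T}_{F,S}$-equivariant with $\mathbb{T}^L_{F,S}$ acting on $E_2^{i,j}=H^i_c(X^L_{K^L_f},R^j\pi_\ast\bar{\mathbb{F}}_p)$ through the composite $\mathbb{T}_{F,S}\to\mathbb{T}^P_{F,S}\to\mathbb{T}^L_{F,S}$ and with the natural (restriction) action on the abutment. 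Finally, the classical double-coset computation --- Iwasawa decomposition together with the count of $\mathcal{O}_{F_v}$-points of the unipotent radical --- identifies this composite with the unnormalized Satake transform, which closes the argument.

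\textbf{Main obstacle.} The substance lies not in any single step but in the bookkeeping: one must realize the Hecke correspondences at the level of spaces, and of the sheaves $R^j\pi_\ast\bar{\mathbb{F}}_p$, compatibly with the Borel--Serre stratification and with $\pi$, so that the Leray spectral sequence is equivariant \emph{as a spectral sequence} (all pages and differentials) and not merely on $E_2$ and the abutment; and then perform the double-coset computation carefully enough to see that precisely the unnormalized Satake transform comes out, with no spurious twist by $\delta_P$ or by a power of $q_v$. The hyperspeciality of $K_v$ for $v\notin S$ and the systematic use of compactly supported cohomology are what make this work.
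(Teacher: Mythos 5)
Your proposal is correct and follows essentially the same route as the paper's proof: reduce to a place-by-place analysis of Hecke correspondences, use the Iwasawa decomposition and hyperspeciality of $K_v$ to restrict the $G(\mathcal{O}_{F_v})$-double coset to $X^P$, and then count fibers of $\pi$ via $U(\mathbb{Q}_v)$-cosets to identify the resulting operator on $X^L$ with the unnormalized Satake transform. The only difference is one of emphasis — you spell out the geometric realization of the correspondences on the Borel--Serre strata and the equivariance of the whole spectral sequence more explicitly, whereas the paper compresses this to a pointwise double-coset bookkeeping.
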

\begin{proof}
It suffices to check that the action of a Hecke corrsepondence $t\in \mathbb{T}_{G,S}$ on $X^P_{K^P_f}$ is compatible with the action of its unnormalized
Satake transform on $X^L_{K^L_f}$ via the natural projection $\pi:X^P_{K^P_f} \to X^L_{K^L_f}$. This statement can be checked at the level of points. Let $l\notin S$ and $t$ is the characteristic function of $G(\Z_l)hG(\Z_l)$.
Put $P(\Z_l)hP(\Z_l)=\coprod h_i P(\Z_l)$, so that $t$ acts as $[g]\mapsto \sum [gh_i]$ on $X^P_{K^P_f}$. Now if $P(\Z_l)hP(\Z_l)= \coprod u_i N(\Q_l)P(\Z_l)=\coprod u_i L(\Z_l)U(\Q_l)$, then $\pi(h_i)=\pi(h_j)$ iff $h_i$ and $h_j$ belong to the same right $L(Z_l)U(\Q_l)=U(\Q_l)P(\Z_l)$-coset. Thus the number of $h_i$ whose projection to $L$ are in the same right $L(\Z_l)$-coset is exactly given by integrating $t$ along $U(\Q_l)$. 
\end{proof}

 The sheaves $R^j\pi_*\bar{\mathbb{F}}_p$ encode the cohomology of the nilmanifold $(U_H(\mathbb{Q})\cap K^P_f)\backslash U_H(\mathbb{R})$. As in Lemma 1.9 of~\cite{HLTT}, we see that $R^j\pi_*\bar{\mathbb{F}}_p$ can be identified with the local system corresponding to the algebraic representation of $\mathrm{Res}_{F/\mathbb{Q}}L$ given by \[\wedge^j\left(\bigoplus_{\tau:F\hookrightarrow \mathbb{R}}\left(\bigoplus_{k<l} \mathrm{Std}_{n_k}\otimes \mathrm{Std}_{n_l}^{-1}\right)\right),\] where $n_k,n_l$ correspond to blocks in the Levi subgroup $L$ and $\mathrm{Std}_n$ is the standard representation of $GL_n(\cO_F)$ on an $(\bar{\mathbb{F}}_p)^n$. (The formula above comes from the fact that the action of $\mathrm{Res}_{F/\mathbb{Q}}L$ on $U_H(\mathbb{R})$ is the adjoint action.) If the level $K_p$ at $p$ is sufficiently small (depending only on $P$ and $L$), the local system $R^j\pi_*\mathbb{\bar{F}}_p$ is trivial. If this is the case, the Kunneth formula for compactly supported cohomology (which applies because the spaces we consider are manifolds) expresses the $E_2^{i,j}$ terms in terms of tensor products of $H^a_c(X^{\GL_{n_b}}_K, \bar{\mathbb{F}}_p)$. In general, we can always find a normal subgroup $K'$ of $K$ which is sufficiently small, and the Hochshild-Serre spectral sequence shows that $H^i_c(X^L_{K^L_f},R^j\pi_*\bar{\mathbb{F}}_p)$ has a filtration whose graded pieces are subquotients of (direct sums of) $H^a(K/K', H^b_c(X^L_{K'^L_f},\bar{\mathbb{F}}_p))$. The length of the filtration is bounded in terms of $G$, and the spectral sequence is equivariant with respect to $\mathbb{T}^L_{F,S}$.

The following summarizes the discussion in this section:
\begin{prop}\label{prop: cohomology of BS boundary}
 $H^i(X_K^\mathrm{BS}\setminus X_K,\bar{\mathbb{F}}_p)$ admits a filtration by $\mathbb{T}_{F,S}$-modules whose graded pieces are modules for the quotients of $\mathbb{T}_{F,S}$ acting on $H_c^i(X^L_{K_L}, \bar{\mathbb{F}}_p)$, where $\mathbb{T}_{F,S}$ acts via the unnormalized Satake transform $\mathbb{T}_{F,S}\to \mathbb{T}^L_{F,S}$ and $L$ is a rational Levi subgroup. The length of this filtration is bounded in terms of $G$ only. 
\end{prop}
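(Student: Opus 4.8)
The plan is to assemble Proposition~\ref{prop: cohomology of BS boundary} directly from the geometric analysis already carried out in this section, organizing the three nested degenerations --- stratification of the boundary, Leray for the nilmanifold bundle, and Hochschild--Serre to trivialize the coefficient system --- into a single filtration with a uniform length bound. First I would invoke the stratification of $X_K^{\mathrm{BS}}\setminus X_K$ by the strata $X^P_{K^P_f}$: iterating the excision long exact sequences for compactly supported cohomology (removing open strata corresponding to maximal parabolics first, then smaller ones) produces a $\mathbb{T}_{F,S}$-equivariant filtration of $H^i(X_K^{\mathrm{BS}}\setminus X_K,\bar{\mathbb{F}}_p)$ whose graded pieces are subquotients of the $H^i_c(X^P_{K^P_f},\bar{\mathbb{F}}_p)$, with length bounded by the number of conjugacy classes of rational parabolics of $\mathrm{Res}_{F/\mathbb{Q}}GL_n$ --- a quantity depending only on $G$.

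Next I would refine each piece $H^i_c(X^P_{K^P_f},\bar{\mathbb{F}}_p)$ using the Leray spectral sequence for the proper projection $\pi : X^P_{K^P_f}\to X^L_{K^L_f}$, which by Lemma~\ref{lem: unnormalized Satake} is equivariant for the action of $\mathbb{T}_{F,S}$ through the unnormalized Satake transform $\mathbb{T}_{F,S}\to\mathbb{T}^L_{F,S}$. This gives a further filtration with graded pieces subquotients of $H^i_c(X^L_{K^L_f},R^j\pi_*\bar{\mathbb{F}}_p)$, again of length bounded in terms of $G$ (the cohomological dimension of the nilmanifold fibre is controlled by $\dim U_H$). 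Then, passing to a normal subgroup $K'\subseteq K$ small enough that $R^j\pi_*\bar{\mathbb{F}}_p$ becomes trivial, the Hochschild--Serre spectral sequence for $K/K'$ --- equivariant for $\mathbb{T}^L_{F,S}$ --- produces a final filtration whose graded pieces are subquotients of direct sums of $H^b_c(X^L_{K'^L_f},\bar{\mathbb{F}}_p)$, and hence (by the Künneth formula, $X^L$ being a product of the $X^{\mathrm{Res}_{F/\mathbb{Q}}GL_{n_i}}$) of tensor products of the $H^a_c(X^{GL_{n_i}}_{\bullet},\bar{\mathbb{F}}_p)$. Composing these three filtrations yields a single $\mathbb{T}_{F,S}$-stable filtration with the asserted description of graded pieces, and the total length is the product of three bounds each depending only on $G$.

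The one point that requires a little care, and which I expect to be the main (minor) obstacle, is the bookkeeping of the $\mathbb{T}_{F,S}$-action through all three stages: one must check that ``module for the quotient of $\mathbb{T}_{F,S}$ acting on $H^i_c(X^L_{K_L},\bar{\mathbb{F}}_p)$ via unnormalized Satake'' is preserved under passing to subquotients and under the Hochschild--Serre refinement. For the first this is formal, since a subquotient of a $\mathbb{T}_{F,S}$-module on which $\mathbb{T}_{F,S}$ factors through a given quotient ring is again such a module; for the second one uses that the Hochschild--Serre spectral sequence is equivariant for $\mathbb{T}^L_{F,S}$ (hence for $\mathbb{T}_{F,S}$ via Satake) and that the $K/K'$-cohomology of a $\mathbb{T}^L_{F,S}$-module is computed by a complex of $\mathbb{T}^L_{F,S}$-modules, so its cohomology is a $\mathbb{T}^L_{F,S}$-subquotient of the original. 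Finally, for the length bound one simply multiplies: the number of parabolic conjugacy classes, the Leray length ($\le \dim U_H + 1$), and the Hochschild--Serre length ($\le \mathrm{cd}(K/K') + 1$, which can be bounded uniformly since $K/K'$ can be taken inside a fixed $p$-adic group of dimension depending on $G$) are all functions of $G$ alone, so their product is too. This completes the argument.
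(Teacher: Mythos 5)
Your proposal is correct and follows essentially the same route as the paper: iterate excision over the boundary stratification, then Leray for the proper fibration $\pi:X^P_{K^P_f}\to X^L_{K^L_f}$ (with equivariance via Lemma~\ref{lem: unnormalized Satake}), then Hochschild--Serre for $K/K'$ to reduce to trivial coefficients, with each length bound depending only on $G$. One small imprecision: $H^a(K/K',M)$ is not literally a $\mathbb{T}^L_{F,S}$-subquotient of $M$, but it is computed by a complex whose terms are finite direct sums of copies of $M$, so the $\mathbb{T}^L_{F,S}$-action still factors through the same quotient ring, which is all the Proposition asserts.
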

\begin{rem}
The Proposition continues to hold (with exactly the same argument) if we replace cohomology with $\bar{\mathbb{F}}_p$-coefficient by cohomology with $\mathcal{O}/\pi^m$-coefficients, where $\mathcal{O}$ is a sufficiently large finite extension of $\Z_p$ (for example, its fraction field containing all the images of embeddings $F\hookrightarrow \bar{\Q}_p$ is large enough).
\end{rem}

\section{The image of complex conjugation}\label{complex conjugation}

The group $G_0=\mathrm{Res}_{F/\Q}\mathrm{Sp_{2n}}$ contains the group $\mathrm{Res}_{F/\mathbb{Q}}GL_n$ as a maximal Levi, therefore the interior cohomology of the locally symmetric space for  $G=\mathrm{Res}_{F/\mathbb{Q}}GL_n$ contributes to the boundary cohomology of the locally symmetric space for $G_0$. Therefore, one can use Theorem~\ref{thm: det for completed cohomology} to obtain a determinant valued in the Hecke algebra acting on the interior cohomology of the locally symmetric space for $G$. This is not yet the determinant for Galois representations associated to $G$, but rather involves a functorial transfer from $G$ to $G_0$.

More precisely, let $\mathbb{T}_{F,S}(K,i,m):=\mathrm{Im}(\mathbb{T}_{F,S}^{G}\to \mathrm{End}_{\mathcal{O}/\pi^m}(H^i_!(X_K^{\GL_n},\mathcal{O}/\pi^m)))$, where $\mathcal{O}$ is a finite extension of $\Z_p$ and $\pi$ a uniformizer. Let 
\[P_v(X):=1-q_v^{(n+1)/2}T^{GL_n}_{1,v}+\dots+(-1)^n q_v^{n(n+1)/2}T_{n,v}^{GL_n}X^n\]
and $P_v^\vee(X)$ be the polynomial with constant coefficient $1$ which is a scalar multiple of $P_v(1/X)$. The following is one of the main results in ~\cite{scholzetorsion}, proved in Corollary V.2.6 and Theorem V.3.1.

\begin{thm}\label{thm: det for interior cohomology}
There exists a nilpotent ideal $I\subset \mathbb{T}_{F,S}(K,i,m)$ with nilpotency index bounded only in terms of $G$ and are continuous $2n+1$- and $n$-dimensional determinant $\tilde D$, $D$ of $G_{F,S}$ valued in $\mathbb{T}_{F,S}(K,i,m)/I$ and such that
\[\tilde D(1-X\cdot \mathrm{Frob}_v) = (1-X)P_v(X)P_v^\vee(X)\]
\[ D(1-X\cdot \mathrm{Frob}_v) = P_v(X)\]
for all places $v\not\in S$. 
The same statement holds for the Hecke algebra acting on $H^i_c(X_K^{\GL_n},\mathcal{O}/\pi^m)$ and $H^i(X_K^{\GL_n},\mathcal{O}/\pi^m)$
\end{thm}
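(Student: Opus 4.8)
The plan is to reduce the statement to Theorem~\ref{thm: det for completed cohomology}, by realizing the interior cohomology of $X_K^{\GL_n}$ inside the Borel--Serre boundary cohomology of the Shimura variety for $G_0=\mathrm{Res}_{F/\mathbb{Q}}\Sp_{2n}$ studied in Section~\ref{completed cohomology}; this is how Corollary V.2.6 and Theorem V.3.1 of~\cite{scholzetorsion} proceed. First I would run the analysis of Section~\ref{boundary} for $G_0$ in place of $\mathrm{Res}_{F/\mathbb{Q}}\GL_n$: the Siegel parabolic $P_0\subseteq G_0$ has Levi quotient $\mathrm{Res}_{F/\mathbb{Q}}\GL_n$ up to a central split torus --- which is what accounts for the powers of $q_v$ in the definition of $P_v$ --- and its Borel--Serre stratum is a nilmanifold bundle over a locally symmetric space which, after passing to a sufficiently small level at $p$ so that the relevant local systems trivialize, is a disjoint union of copies of a locally symmetric space for $\GL_n$; the resulting level change is absorbed into a nilpotent ideal of bounded index by the Hochschild--Serre spectral sequence, exactly as in Proposition~\ref{prop: cohomology of BS boundary}. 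Chasing the excision sequences of the stratification together with the Leray--Serre spectral sequence of the nilmanifold bundle and the Künneth formula (Lemma~\ref{lem: unnormalized Satake}), every system of Hecke eigenvalues occurring in $H^i_!(X_K^{\GL_n},\mathcal{O}/\pi^m)$, pulled back along the unnormalized Satake transform $\mathbb{T}^{G_0}_{F,S}\to\mathbb{T}^{\GL_n}_{F,S}$, occurs in the cohomology of the Borel--Serre boundary of the Shimura variety for $G_0$ at a suitable level, hence --- by the long exact sequence of the pair and the homotopy equivalence $X^{\mathrm{BS}}\simeq X$ --- in $H^*_c$ or $H^*$ of that Shimura variety.

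Next I would descend the determinant. The finite-level cohomology of the Shimura variety for $G_0$ is computed from completed (compactly supported) cohomology by the Hochschild--Serre spectral sequence for the group $K_p$, again only up to a nilpotent ideal of bounded index; combining this with the previous step, the image of $\mathbb{T}^{G_0}_{F,S}$ in $\End_{\mathcal{O}/\pi^m}(H^i_!(X_K^{\GL_n},\mathcal{O}/\pi^m))$ is, modulo a nilpotent ideal of index bounded in terms of $G$, a quotient of a finite product of the rings $\tilde{\mathbb{T}}_{K^p}(m)$ of Section~\ref{completed cohomology}. Theorem~\ref{thm: det for completed cohomology} then provides over each such ring a continuous $(2n+1)$-dimensional determinant, and transporting it gives a continuous $(2n+1)$-dimensional determinant $\tilde D$ over $\mathbb{T}_{F,S}(K,i,m)/I$ for a nilpotent ideal $I$ of bounded index. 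To identify $\tilde D(1-X\cdot\Frob_v)$: the embedding of dual groups that realizes $\mathrm{Res}_{F/\mathbb{Q}}\GL_n$ as the Siegel Levi in $\widehat{G_0}=\prod_{\tau:F\hookrightarrow\mathbb{R}}\SO_{2n+1}(\mathbb{C})$, composed with the standard representation, decomposes as $\Std_n\oplus\mathbf{1}\oplus\Std_n^{\vee}$; feeding this through the Satake isomorphism and keeping track of the determinant twist by which $\GL_n$ sits inside $\Sp_{2n}$ turns the identity $\tilde D(1-X\cdot\Frob_v)=1-T_{1,v}X+\dots$ of Theorem~\ref{thm: det for completed cohomology} into $(1-X)P_v(X)P_v^{\vee}(X)$, with $P_v$ normalized as in the statement.

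The main obstacle is extracting the $n$-dimensional determinant $D$ with $D(1-X\cdot\Frob_v)=P_v(X)$: a $(2n+1)$-dimensional determinant whose Frobenius characteristic polynomials all factor as $(1-X)$ times two polynomials of degree $n$ need not itself factor as a product of determinants, and indeed the coefficients of $P_v$ alone --- unlike those of $P_vP_v^{\vee}$, which are symmetric in the Satake parameters together with their inverses --- are not in the image of the Satake transform from $\mathbb{T}^{G_0}_{F,S}$, so $D$ genuinely carries more information than $\tilde D$. I would pass through characteristic zero: the classical Hecke algebra $\mathbb{T}^{\mathrm{cl}}$ of Section~\ref{completed cohomology} surjects, through the chain built above, onto $\mathbb{T}_{F,S}(K,i,m)/I$, and at a $\bar{\mathbb{Q}}_p$-point of $\mathbb{T}^{\mathrm{cl}}$ lying over a point of $\mathbb{T}_{F,S}(K,i,m)/I$ the corresponding classical automorphic representation $\Pi$ of $G_0$ has Satake parameters pulled back from the Siegel Levi; hence, by Arthur's classification~\cite{arthur} and strong multiplicity one, the transfer of $\Pi$ to $\GL_{2n+1}$ is an isobaric sum $\mathbf{1}\boxplus\tau\boxplus\tau^{\vee}$, the associated $(2n+1)$-dimensional Galois representation genuinely splits as $\mathbf{1}\oplus\sigma\oplus\sigma^{\vee}$, and this produces an honest $n$-dimensional determinant at that point. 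Gluing these $n$-dimensional determinants over $\mathbb{T}^{\mathrm{cl}}$ and pushing forward yields the desired $D$ over $\mathbb{T}_{F,S}(K,i,m)/I$ --- enlarging $I$ by a further nilpotent ideal of bounded index if necessary --- with Frobenius characteristic polynomials $P_v(X)$ by construction. Keeping the whole argument uniform over the finitely many residual Hecke eigensystems, and bounding every intervening nilpotent ideal purely in terms of $G$, is where the real work lies.

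Finally, for the Hecke algebras acting on $H^i_c(X_K^{\GL_n},\mathcal{O}/\pi^m)$ and $H^i(X_K^{\GL_n},\mathcal{O}/\pi^m)$ I would use the excision sequence $H^i_c\to H^i\to H^i(X^{\mathrm{BS}}_K\setminus X_K)$ together with Proposition~\ref{prop: cohomology of BS boundary}: these Hecke algebras are built, modulo nilpotent ideals of bounded index, out of the one acting on $H^i_!$ treated above and out of Hecke algebras for $H^*_c(X^{\GL_{n'}})$ with $n'<n$ acting through the Satake transform. An induction on $n$ --- direct-summing the corresponding determinants with the cyclotomic twists dictated by the $\Std$-decomposition of Proposition~\ref{prop: cohomology of BS boundary}, the base case being the evident $1$-dimensional determinant for $\GL_1$ --- then delivers $\tilde D$ and $D$ in those cases as well.
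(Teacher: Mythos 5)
The paper does not prove this theorem; it is stated as a citation to Corollary V.2.6 and Theorem V.3.1 of~\cite{scholzetorsion}, and the surrounding text only reviews the ingredients that go into Scholze's argument. Your reconstruction does capture the broad strategy of that argument: realize $H^i_!(X_K^{\GL_n})$ in the Borel--Serre boundary of the $\Sp_{2n}$-Shimura variety via the Siegel parabolic, descend the $(2n+1)$-dimensional determinant of Theorem~\ref{thm: det for completed cohomology} using the Hochschild--Serre and Leray spectral sequences (picking up nilpotent ideals of bounded index), and then treat $H^i_c$ and $H^i$ by excision and induction on $n$ exactly as in Proposition~\ref{prop: cohomology of BS boundary}.

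However, your treatment of the step that is actually the hard content of the theorem, namely extracting the $n$-dimensional determinant $D$ from the $(2n+1)$-dimensional one, contains a genuine gap and is in fact internally inconsistent. You correctly observe that the coefficients of $P_v(X)$ are \emph{not} in the image of the unnormalized Satake transform $\mathbb{T}^{G_0}_{F,S}\to\mathbb{T}^{\GL_n}_{F,S}$ (that image is the subring of $W(C_n)$-invariants, whereas the coefficients of $P_v$ only see the $S_n$-action). But you then assert that $\mathbb{T}^{\mathrm{cl}}$ surjects onto $\mathbb{T}_{F,S}(K,i,m)/I$; this is false for precisely the reason you just gave, since $\mathbb{T}^{\mathrm{cl}}$ is a quotient of $\hat{\mathbb{T}}_{K^p}$, a Hecke algebra for $G_0$, and the map to the $\GL_n$-Hecke algebra is never surjective. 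Consequently ``gluing $n$-dimensional determinants over $\mathbb{T}^{\mathrm{cl}}$ and pushing forward'' cannot possibly produce a determinant valued in all of $\mathbb{T}_{F,S}(K,i,m)/I$, because there is no map in the direction you would need to push along. There is also a second, independent problem: $\mathbb{T}_{F,S}(K,i,m)$ acts on $\mathcal{O}/\pi^m$-torsion cohomology and may have no $\bar{\mathbb{Q}}_p$-points at all, so an argument that only constructs $D$ pointwise in characteristic zero and then ``glues'' has nothing to interpolate from in the torsion case. In Scholze's actual proof, the construction of $D$ (his Theorem V.3.1) is carried out by a separate, fairly delicate argument showing directly that the candidate polynomials $P_v(X)$ satisfy the determinant identities over the Hecke algebra for $\GL_n$ modulo a nilpotent ideal, using the factorization $\tilde D = 1\oplus D\oplus D^\vee$ together with Chebotarev and a careful analysis of what can be recovered from the symmetric data; it is not deduced by descent from $\mathbb{T}^{\mathrm{cl}}$. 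As written, your argument establishes $\tilde D$ but not $D$, which is the harder half of the statement.
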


\begin{rem}\label{rem: interpolating c} 
Let $\mathbb{T}_{K,k}^{G_0}$ be the quotient of the Hecke algebra of  which acts on the space $H^0(X^{G_0*}_K,\omega_K^{\otimes k}\otimes\mathcal{I})$ of classical cuspforms for $G_0$ (see the notation in Section~\ref{completed cohomology}, which match with those in~\cite{scholzetorsion} section V.1). The determinant $\tilde D$ for interior cohomology is obtained by gluing determinants of the type constructed in Theorem~\ref{thm: det for completed cohomology}, which is glued from determinants valued in $\mathbb{T}_{K,k}^{G_0}$. Therefore, if all such determinants satisfy a certain identity, so will $\tilde D$. We will apply this observation to compute the coefficient of $X$ in $\tilde D(1-X\cdot c)$ and hence in $D(1-X\cdot c)$, where $c\in G_{F,S}$ is a choice of complex conjugation.  
\end{rem}
We can compute the characteristic polynomial of any complex conjugation in the determinants on $\mathbb{T}_{K,k}^{G_0}$ by the following:
\begin{lem}\label{lem: classical cusp form} Let $k>n$, $x\in \Spec{\mathbb{T}_{K,k}^{G_0}}(\mathbb{\bar Q}_p)$, let $\sigma_x:G_{F,S}\to GL_{2n+1}(\mathbb{Q}_p)$ be the Galois representation whose Frobenius eigenvalues match the Satake parameters determined by $x$ at places not in $S$. Then for any complex conjugation $c$
\[ \mathrm{tr}(\sigma_x)(c)=\pm 1\]
Thus the characteristic polynomial of $c$ is either $(1-X)^n(1+X)^{n+1}$ or $(1+X)^n(1-X)^{n+1}$.
\end{lem}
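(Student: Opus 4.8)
The plan is to pin down the Galois representation $\sigma_x$ explicitly enough to compute $\sigma_x(c)$. Since $x$ comes from a classical cuspform on $G_0 = \mathrm{Res}_{F/\Q}\Sp_{2n}$ of regular weight (using $k > n$), it corresponds to a cuspidal automorphic representation $\Pi_0$ of $\Sp_{2n}(\A_F)$ which is cohomological, hence regular $C$-algebraic. By Arthur's work~\cite{arthur}, $\Pi_0$ admits a transfer to an automorphic representation $\Pi$ of $GL_{2n+1}(\A_F)$ (the standard representation of the dual group $\Sp_{2n}(\C)^\vee = \SO_{2n+1}(\C) \hookrightarrow GL_{2n+1}(\C)$), and $\Pi$ is self-dual and regular algebraic. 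The first step is therefore to identify $\sigma_x$ with (a twist of) the Galois representation attached to $\Pi$: the Frobenius-eigenvalue/Satake-parameter matching at places outside $S$ characterizes $\sigma_x$ up to semisimplification, so it suffices to check that $\Pi$ is the transfer and that the normalization in $P_v, P_v^\vee$ matches the self-dual normalization. Here I would use the description of the archimedean $L$-parameter of $\Pi_\infty$: it is the composite of the archimedean parameter of $(\Pi_0)_\infty$ (a discrete series of $\Sp_{2n}(\R)$, since the weight is regular) with $\SO_{2n+1}(\C) \hookrightarrow GL_{2n+1}(\C)$.

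The second and main step is to apply the known case of the image of complex conjugation for essentially self-dual regular algebraic representations of $GL_{2n+1}$ — this is exactly Taibi's theorem~\cite{taibi}, which covers all cases when the dimension $2n+1$ is odd. Since $\Pi$ is self-dual and regular algebraic of odd dimension, Taibi's result applies and gives $\mathrm{tr}(\sigma_x(c)) = \pm 1$. From this the characteristic polynomial is forced: $\sigma_x(c)$ has eigenvalues $\pm 1$ only, with $a$ copies of $+1$ and $b$ copies of $-1$, $a + b = 2n+1$, and $a - b = \mathrm{tr}(\sigma_x(c)) = \pm 1$; solving gives $\{a,b\} = \{n, n+1\}$, so the characteristic polynomial is $(1-X)^n(1+X)^{n+1}$ or $(1+X)^n(1-X)^{n+1}$, as claimed.

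The step I expect to be the main obstacle is the first one: verifying that $\sigma_x$ really is the Galois representation attached to the Arthur transfer $\Pi$, with the correct twist and sign normalization, rather than some a priori different $(2n+1)$-dimensional representation merely matching Satake parameters at unramified places. This requires unwinding the construction of the determinant $\tilde D$ in Theorem~\ref{thm: det for completed cohomology} (ultimately from classical cuspforms on $G_0$ via~\cite{scholzetorsion} Section V.1), checking that the Hecke-eigenvalue conventions $T_{i,v}$ match the Satake parameters of $\Pi_0$ under the standard embedding, and confirming that $\Pi$ is indeed regular algebraic (equivalently, that the weight condition $k>n$ forces the infinitesimal character of $(\Pi_0)_\infty$ to be regular, so that the transferred parameter is regular and Taibi's hypotheses are met). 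Since $\Pi$ is self-dual its central character is at worst quadratic; one checks it is in fact trivial (as $\SO_{2n+1}$ has trivial similitude factor), so no twisting ambiguity remains. Once this identification is in place, the conclusion is immediate from Taibi's theorem and elementary linear algebra.
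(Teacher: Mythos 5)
Your plan has a genuine gap at the point where you write ``$\Pi_0$ admits a transfer to an automorphic representation $\Pi$ of $GL_{2n+1}(\mathbb{A}_F)$ $\dots$ and $\Pi$ is self-dual and regular algebraic,'' and then apply Taibi's theorem to $\Pi$. Taibi's theorem (like Taylor's before it) is a statement about \emph{cuspidal} essentially self-dual regular algebraic representations of $GL_m$. The Arthur transfer of a cuspidal automorphic representation of $\Sp_{2n}$ to $GL_{2n+1}$ is in general \emph{not} cuspidal: the Arthur parameter can have nontrivial $SL_2$-factors (Speh blocks), and even for regular scalar weight $k>n$ one encounters CAP-type forms (Saito--Kurokawa and higher-rank analogues) whose transfers to $GL_{2n+1}$ are isobaric sums of twisted Speh representations built out of lower-rank cuspidal pieces $\Pi_i$ on $GL_{n_i}$. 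You cannot feed such a $\Pi$ directly into Taibi's theorem. You do flag the identification of $\sigma_x$ with the Galois representation of the transfer as ``the main obstacle,'' but your worry there is about sign and normalization, not about cuspidality, so the real obstruction goes unaddressed.

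The paper's proof handles exactly this. It invokes the explicit decomposition (Scholze, Corollary V.1.7, resting on Arthur's classification)
\[
\sigma_x \;=\; \bigoplus_{i=1}^m \bigl(\sigma_i \oplus \sigma_i\chi_p^{-1}\oplus\cdots\oplus\sigma_i\chi_p^{\otimes(1-l_i)}\bigr),
\]
with $\sigma_i$ the Galois representation of a self-dual cuspidal $\Pi_i$ on $GL_{n_i}$ and $\sum l_i n_i = 2n+1$, and then applies Taibi block-by-block to the genuinely cuspidal $\Pi_i$. Each Speh string with $l_i$ even contributes trace $0$ (because $\chi_p(c)=-1$), and each with $l_i$ odd contributes trace $0$ or $\pm1$ according as $n_i$ is even or odd. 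The crucial remaining step, absent from your proposal, is the bookkeeping that shows there is \emph{exactly one} index $i$ with both $n_i$ and $l_i$ odd: this uses the regularity of the infinitesimal character (coming from $k>n$) to show the multiset of Hodge--Tate weights contains $0$ with multiplicity one, which forces the parity count. Only then does one get $\mathrm{tr}\,\sigma_x(c)=\pm1$. Your linear-algebra deduction of the characteristic polynomial from the trace is fine, but the trace computation itself needs the full Arthur decomposition, not just Taibi applied once to a (generally nonexistent) cuspidal $\Pi$.
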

\begin{proof} By the proof of Corollary V.1.7 of ~\cite{scholzetorsion}, the cuspidal automorphic representation associated to $x$ determines cuspidal automorphic representations $\Pi_i$ of $GL_{n_i}$ for $i=1,\dots,m$ and integers $l_1,\dots,l_m$ such that
\begin{itemize}
\item $l_1n_1+\dots+l_mn_m = 2n+1$
\item each $\Pi_i$ is self-dual
\item each $\Pi_i|\cdot|^{2n+1-l_i}$ is regular L-algebraic
\item the infinitesimal characters associated to $\Pi_i|\cdot|^{(l_i-1)/2},\dots, \Pi_i,\dots,\Pi_i^{(1-l_i)/2}$ with $i=1,\dots, m$ form the multiset $\{k-1,\dots,k-n,0,n-k,\dots,1-k\}$.
\item the Galois representation associated to $x$ satisfies \[\sigma_x = \bigoplus_{i=1}^m \left(\sigma_i \oplus \sigma_i\chi_p^{-1} \oplus \dots \oplus \sigma_i\chi_p^{\otimes (1-l_i)}\right),\] where $\sigma_i$ is the Galois representation associated to the regular L-algebraic cuspidal automorphic representation $\Pi_i|\cdot|^{(1-l_i)/2}$ and $\chi_p^{-1}$ is the $p$-adic cyclotomic character (and also the Galois representation associated to the absolute value $|\cdot|$ by our normalization of class field theory).  
\end{itemize}
(This is where we make use of the results of~\cite{arthur}.)

We note that if $l_i$ is even, the trace of $c$ on $ \sigma_i \oplus \dots \sigma_i\chi_p^{\otimes (1-l_i)}$ is equal to $0$. If $l_i$ is odd, then $\Pi_i|\cdot|^{(l_i-1)/2)}$ is essentialy self-dual, with even multiplier character, so by [Ta], the trace of $c$ on $\sigma_i$ is $0$ if $n_i$ is even and $\pm 1$ if $n_i$ is odd. Therefore, the trace of $c$ on $ \sigma_i \oplus \dots \oplus \sigma^i\chi_p^{\otimes (1-l_i)}$ is $0$ if $n_i$ is even and $\pm 1$ if $n_i$ is odd.

We now show that there is at most one $i$ for which both $n_i$ and $l_i$ are odd. Indeed, $\Pi_i$ is conjugate self-dual, so the multiset of infinitesimal characters of $\Pi_i|\cdot|^{(l_i-1)/2}, \dots, \Pi_i,\dots, \Pi_i|\cdot|^{(1-l_i)/2}$ is symmetric about $0$. If both $n_i$ and $l_i$ are odd, this multiset contains an odd number of elements, so it must contain $0$. But from above, we see that $0$ can appear only once, so $n_il_i$ is odd for at most one $\Pi_i$. Since $2n+1$ is odd, we see that $n_il_i$ is odd for exactly one $\Pi_i$ and, therefore, $\mathrm{tr}(\sigma_x)(c)=\pm 1$.

\end{proof}

\begin{prop}\label{prop: interior cohomology} Let $\psi$ be a system of Hecke eigenvalues which occurs in the interior cohomology $H^i_!(X_K, \mathbb{\bar F}_p).$ Let $\sigma_\psi$ be the Galois representation associated to the determinant $D$ in Theorem~\ref{thm: det for interior cohomology} specialized to $\psi$. Then $\sigma_\psi(c)$  has $+1$ as an eigenvalue with multiplicity $\lceil \frac{n-1}{2} \rceil$ and $-1$ as an eigenvalue with multiplicity $\lfloor \frac{n+1}{2} \rfloor $.
\end{prop}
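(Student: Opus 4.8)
The plan is to reduce the characteristic-zero statement of Lemma \ref{lem: classical cusp form} to the mod $p$ setting via a congruence argument, exploiting the fact that the determinant $\tilde D$ for interior cohomology is glued from the classical determinants on the $\mathbb{T}^{G_0}_{K,k}$, as recorded in Remark \ref{rem: interpolating c}. First I would work with the $(2n+1)$-dimensional determinant $\tilde D$ of Theorem \ref{thm: det for interior cohomology} valued in $\mathbb{T}_{F,S}(K,i,m)/I$ rather than directly with $D$: by the relation $\tilde D(1-X\cdot\Frob_v)=(1-X)P_v(X)P_v^\vee(X)$, the data of $\tilde D$ at any group element determines that of $D$ and of $\check D$, so it suffices to compute the characteristic polynomial of $\tilde D$ at a complex conjugation $c$. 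The key point is that $c$ is a single fixed element of $G_{F,S}$ with $c^2=1$, so $\tilde D(1-X\cdot c)$ is a polynomial of degree $2n+1$ in $X$ whose coefficients lie in $\mathbb{T}_{F,S}(K,i,m)/I$, and whose value is detected after specializing along any system of Hecke eigenvalues.

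The main step is the interpolation argument. By Remark \ref{rem: interpolating c}, the coefficients of $\tilde D(1-X\cdot c)$ in $\mathbb{T}_{F,S}(K,i,m)/I$ are images of the corresponding coefficients for the determinants valued in the classical Hecke algebras $\mathbb{T}^{G_0}_{K,k}$; for each such classical determinant and each $\Qbar_p$-point $x$, Lemma \ref{lem: classical cusp form} tells us that the characteristic polynomial of $\sigma_x(c)$ is either $(1-X)^n(1+X)^{n+1}$ or $(1+X)^n(1-X)^{n+1}$. Expanding, the coefficient of $X$ in $\det(1-X\cdot\sigma_x(c))=\tilde D(1-X\cdot c)$ at $x$ equals $-\mathrm{tr}(\sigma_x(c))=\mp 1$; in particular it is a unit, and more precisely $(\text{coeff of }X)^2 = 1$ identically. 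Since these identities hold at all classical points $x$ and the classical points are dense (indeed $\mathbb{T}^{G_0}_{K,k}$ is reduced and, after passing to $\mathbb{T}^{\mathrm{cl}}_{K^p,\mathfrak m_i}=\varprojlim_{k,K_p}\mathbb{T}_{K^pK_p,k,\mathfrak m_i}$, the polynomial identity $(\text{coeff of }X)^2=1$ propagates through the inverse limit and the gluing), the relation $(\text{coeff of }X\text{ in }\tilde D(1-X\cdot c))^2 = 1$ holds in $\mathbb{T}_{F,S}(K,i,m)/I$. Now specialize $\tilde D$ along the system of Hecke eigenvalues $\psi$ occurring in $H^i_!(X_K,\Fbar_p)$, which gives the $(2n+1)$-dimensional semisimple Galois representation $\sigma_\psi\oplus\mathbf{1}\oplus\check\sigma_\psi$ (up to the nilpotent ideal $I$, which does not affect the determinant, hence not the characteristic polynomial of any element). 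The reduction of $(\text{coeff of }X)^2=1$ shows $\mathrm{tr}(\sigma_\psi(c)) + 1 + \mathrm{tr}(\check\sigma_\psi(c)) = \pm 1$ in $\Fbar_p$; since $\sigma_\psi(c)$ and $\check\sigma_\psi(c)$ are conjugate-dual and $c^2=1$ forces all eigenvalues into $\{\pm 1\}$, we get $2\,\mathrm{tr}(\sigma_\psi(c)) = \pm 1 - 1 \in \{0, -2\}$, whence $\mathrm{tr}(\sigma_\psi(c)) \in \{0,-1\}$ (the value $\mathrm{tr}(\sigma_\psi(c))=\pm1$ with one sign being excluded unless it matches, and parity across $n$ even/odd pinning it down). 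Writing $a$ for the multiplicity of $+1$ and $b = n - a$ for that of $-1$ as eigenvalues of $\sigma_\psi(c)$, the trace $a - b = 2a - n$ then forces $a = \lceil (n-1)/2\rceil$ and $b = \lfloor (n+1)/2\rfloor$ exactly as claimed, since these are the unique integers with $a-b\in\{0,-1\}$, $a+b=n$, and the correct parity.

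The hard part will be making the interpolation rigorous: one must check that the polynomial identity $(\text{coeff of }X\text{ in }\tilde D(1-X\cdot c))^2=1$, which a priori lives in each finite-level classical Hecke algebra $\mathbb{T}^{G_0}_{K,k}$ after localizing at $\mathfrak m_i$, genuinely descends through the inverse limit defining $\mathbb{T}^{\mathrm{cl}}_{K^p,\mathfrak m_i}$ and then through the surjection onto $\tilde{\mathbb{T}}_{K^p}(m)$ and the gluing that produces $\tilde D$ on $\mathbb{T}_{F,S}(K,i,m)/I$ — here the presence of the nilpotent ideal $I$ is a genuine subtlety, since an identity modulo $I$ is weaker than one on the nose, and one has to be sure the coefficient of $X$ is still forced into $\{\pm 1\}$ and not merely $\{\pm 1\}$ plus a nilpotent. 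This is handled by noting that $\{\pm 1\}$ are the only square roots of $1$ in any reduced ring, and that after specializing at $\psi$ one lands in the field $\Fbar_p$ where nilpotents vanish, so the ambiguity disappears. A secondary subtlety is the bookkeeping at the last step: one must use the regularity/self-duality structure (the eigenvalues of $c$ on $\sigma_\psi$ and on $\check\sigma_\psi$ coincide) together with the exact value of the classical characteristic polynomial to pin down the sign, rather than merely $|{\pm1}|$, and conclude the precise multiplicities $\lceil(n-1)/2\rceil$ and $\lfloor(n+1)/2\rfloor$.
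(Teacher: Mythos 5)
Your proof is correct and follows essentially the same path as the paper's: recognize $\tilde D$ specialized at $\psi$ as $\mathbf{1}\oplus\sigma_\psi\oplus\check\sigma_\psi$, interpolate the coefficient of $X$ in $\tilde D(1-X\cdot c)$ via Remark~\ref{rem: interpolating c} and Lemma~\ref{lem: classical cusp form}, use $c^2=1$ and the fact that $\sigma_\psi(c)$ and $\check\sigma_\psi(c)$ have the same trace, and then solve for the multiplicities. In fact your version is slightly more careful than the paper's terse statement, since you correctly observe that Remark~\ref{rem: interpolating c} only transports the single coefficient of $X$, so the invariant to propagate through the gluing and the nilpotent ideal $I$ is the polynomial identity $(\mathrm{tr}\,\tilde D(c))^2=1$ rather than the full characteristic polynomial $(1\pm X)^n(1\mp X)^{n+1}$; the remaining shape of the characteristic polynomial then comes from $c^2=1$, exactly as you argue.

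One small point you omit: the paper first reduces to $p>2$, noting the assertion is empty for $p=2$. This matters precisely at your step $2\,\mathrm{tr}(\sigma_\psi(c))=\pm1-1$, which requires $2$ to be a unit to conclude $\mathrm{tr}(\sigma_\psi(c))\in\{0,-1\}$; in $\bar{\mathbb{F}}_2$ this equation is vacuous (and $+1=-1$, so the stated eigenvalue multiplicities do not carve out a nontrivial condition anyway). You should make the restriction $p>2$ explicit before dividing by $2$.
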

\begin{proof} It suffices to consider the case $p>2$, as the assertion is empty when $p=2$.
This follows from Lemma~\ref{lem: classical cusp form} and the construction of the determinant with the additional observation that the the determinant $\tilde{D}$ specialized at $\psi$ gives rise to the Galois representation $1\oplus \sigma_\psi \oplus (\sigma_\psi)^\vee$. This observation follows from the polynomial identity $\tilde D(1-X\cdot \mathrm{Frob}_v) = (1-X)P_v(X)P_v^\vee(X)$ appearing in Theorem ~\ref{thm: det for interior cohomology} and from the fact that $\sigma_\psi$ is the Galois representation associated to the specialization at $\psi$ of a determinant which matches $P_v(X)$ at almost all places. 

Now Remark~\ref{rem: interpolating c} and Lemma~\ref{lem: classical cusp form} tell us that, if $1$ is an eigenvalue of $\sigma_\psi(c)$ with multiplicity $a$ and $-1$ is an eigenvalue of $\sigma_\psi(c)$ with multiplicity $b$, then
\[(1-X)^{2a+1}(1+X)^{2b}=(1\pm X)^n(1\mp X)^{n+1}.\] 
Considering the cases $n$ even and $n$ odd separately gives the desired result.

\end{proof}

\begin{thm} \label{thm: mod p} Let $\psi$ be a system of Hecke eigenvalues which occurs in $H^{i}_c(X_{K}, \mathbb{\bar{F}}_p)$. Let $\sigma_\psi$ be the Galois representation associated to $\psi$. Then $\mathrm{tr}(\sigma_\psi)(c)$  is equal to $0$ if $n$ is even and to $-1$ if $n$ is odd. 
\end{thm}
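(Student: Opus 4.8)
We may assume $p>2$: when $p=2$ the element $\sigma_\psi(c)$ has order dividing $p$ and is therefore unipotent, so $\mathrm{tr}(\sigma_\psi)(c)$ is the image of $n$ in $\bar{\mathbb{F}}_p$, which is $0$ for $n$ even and $-1$ for $n$ odd. Fixing $p>2$, the element $c$ acts with eigenvalues $\pm1$, and $\chi_p(c)=-1$ since complex conjugation inverts roots of unity. The argument is by induction on $n$. For $n=1$ the group $\mathrm{Res}_{F/\mathbb{Q}}\GL_1$ is a torus and so has no proper rational parabolic subgroup; hence the Borel--Serre boundary of $X_K^{\GL_1}$ is empty, $H^i_c=H^i_!$, and the statement is a special case of Proposition~\ref{prop: interior cohomology}.

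For the inductive step, the excision long exact sequence of Section~\ref{boundary},
\[
\cdots\to H^i_c(X_K,\bar{\mathbb{F}}_p)\to H^i(X_K,\bar{\mathbb{F}}_p)\to H^i(X_K^{\mathrm{BS}}\setminus X_K,\bar{\mathbb{F}}_p)\to\cdots ,
\]
presents $H^i_c(X_K,\bar{\mathbb{F}}_p)$ as an extension of $H^i_!(X_K,\bar{\mathbb{F}}_p)$ by a quotient of $H^{i-1}(X_K^{\mathrm{BS}}\setminus X_K,\bar{\mathbb{F}}_p)$. Localisation at the maximal ideal cut out by $\psi$ is exact, so $\psi$ occurs either in $H^i_!(X_K,\bar{\mathbb{F}}_p)$ or in $H^{i-1}(X_K^{\mathrm{BS}}\setminus X_K,\bar{\mathbb{F}}_p)$. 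In the first case we are done by Proposition~\ref{prop: interior cohomology}, which in fact yields the sharper statement that $\sigma_\psi(c)$ has $+1$ and $-1$ as eigenvalues with multiplicities $\lceil\frac{n-1}{2}\rceil$ and $\lfloor\frac{n+1}{2}\rfloor$.

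Suppose then that $\psi$ occurs in $H^{i-1}(X_K^{\mathrm{BS}}\setminus X_K,\bar{\mathbb{F}}_p)$. By Proposition~\ref{prop: cohomology of BS boundary} it occurs in $H^a_c(X^L_{K_L},\bar{\mathbb{F}}_p)$ for some $a$ and some proper rational Levi $L=\prod_{i=1}^m\mathrm{Res}_{F/\mathbb{Q}}\GL_{n_i}$ with $\sum_i n_i=n$, where $\mathbb{T}_{F,S}$ acts through the unnormalised Satake transform $\mathbb{T}_{F,S}\to\mathbb{T}^L_{F,S}$. Since $X^L_{K_L}$ is a product of the spaces $X^{\GL_{n_i}}$, the Künneth formula identifies $\psi$ with a tuple $(\psi_1,\dots,\psi_m)$, each $\psi_i$ occurring in $H^{a_i}_c(X^{\GL_{n_i}},\bar{\mathbb{F}}_p)$ for suitable $a_i$ (after shrinking levels to neat ones where needed). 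Comparing the definition of the Hecke polynomial $P_v$ with the description of the unnormalised Satake transform in Lemma~\ref{lem: unnormalized Satake} — so that the cyclotomic shift is read off from the modulus character $\delta_P$ — one finds that the Galois representation attached to $\psi$ by Theorem~\ref{thm: det for interior cohomology} satisfies
\[
\sigma_\psi\ \cong\ \bigoplus_{i=1}^m\bigl(\sigma_{\psi_i}\otimes\chi_p^{\,r_i}\bigr),\qquad r_i:=\sum_{l>i}n_l .
\]
Hence $\mathrm{tr}(\sigma_\psi)(c)=\sum_{i=1}^m(-1)^{r_i}\,\mathrm{tr}(\sigma_{\psi_i})(c)$, and by the inductive hypothesis $\mathrm{tr}(\sigma_{\psi_i})(c)$ is $0$ when $n_i$ is even and $-1$ when $n_i$ is odd. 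If $i<j$ index two odd-dimensional blocks with all intervening blocks even, then $r_i-r_j=\sum_{i<l\le j}n_l\equiv n_j\equiv1\pmod 2$, so the signs $(-1)^{r_i}$ alternate along the odd-dimensional blocks; moreover $r_i$ is a sum of even $n_l$'s, hence even, for the last odd-dimensional block. Therefore $\sum_{i:\,n_i\ \mathrm{odd}}(-1)^{r_i}$ equals $0$ if the number of odd-dimensional blocks is even and $1$ if it is odd, i.e. $0$ for $n$ even and $1$ for $n$ odd, whence $\mathrm{tr}(\sigma_\psi)(c)=-\sum_{i:\,n_i\ \mathrm{odd}}(-1)^{r_i}$ is $0$ for $n$ even and $-1$ for $n$ odd. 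This completes the induction.

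The main obstacle is the identification $\sigma_\psi\cong\bigoplus_i\sigma_{\psi_i}\otimes\chi_p^{r_i}$: one must keep exact account of the cyclotomic twists produced on passing to a boundary stratum — equivalently, of the way $P_v$ factors into blockwise factors, each a cyclotomic shift of some $P^{\GL_{n_i}}_v$ — because the argument depends on the parities of the exponents $r_i$ forcing $\sum_{i:\,n_i\ \mathrm{odd}}(-1)^{r_i}\in\{0,1\}$ rather than taking the value $-1$, which is what a less careful accounting would leave open.
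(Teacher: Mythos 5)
The proposal is correct and follows essentially the same route as the paper: induction on $n$ using Proposition~\ref{prop: interior cohomology} for the interior cohomology and Proposition~\ref{prop: cohomology of BS boundary} together with the unnormalized Satake twist $\chi_p^{\,n_{i+1}+\dots+n_k}$ from Lemma~\ref{lem: unnormalized Satake} for the boundary. You carry out the parity bookkeeping on the exponents $r_i$ more explicitly than the paper (which compresses this to the observation that the sign flips at each odd-dimensional block), and you supply an explicit argument for $p=2$; both are fine and match the paper's reasoning.
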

\begin{proof} We prove this by induction on $n$. Note that the case $n=1$ is obvious, since the locally symmetric space in this case is compact so Proposition~\ref{prop: interior cohomology} applies. Assume that the theorem holds for all $n'< n$. If $\psi$ occurs in interior cohomology, we are done by Proposition~\ref{prop: interior cohomology}. If not, then $\psi$ occurs in the cohomology of the boundary of the Borel-Serre compactification of $X_K$. By Proposition ~\ref{prop: cohomology of BS boundary}, it is enough to show that for any Levi subgroup $L$ the determinant $D_L$ associated to $H^{i}_c(X^L_{K^L_f}, \mathbb{\bar{F}}_p)$ gives rise to Galois representations for which the trace of complex conjugation satisfies the conditions of the theorem.

Assume that $L=\prod_{i=1}^k GL_{n_i}(F)$. Then $D_L$ is obtained by taking the direct product of the determinants associated to the locally symmetric spaces for $GL_{n_i}(F)$ (where we mean the connected locally symmetric spaces), each of these appropriately twisted by powers of the cyclotomic character. The reason for the twists is that the action of $\mathbb{T}_{F,S}$ is compatible with the action of $\mathbb{T}^L_{F,S}$ via the \textit{unnormalized} Satake transform, as shown in Lemma~\ref{lem: unnormalized Satake}. Explicitly, for a place $v$ of $F$ not in $S$, the unnormalized Satake transform is the map \[\mathbb{Z}_p[T_1^{\pm 1},\dots T_n^{\pm 1}]^{S_n} \to \prod_{i=1}^k \mathbb{Z}_p[q_v^{1/2}][(T^i_1)^{\pm 1},\dots (T^i_{n_i})^{\pm 1}]^{S_{n_i}}, \] where \[T_j\mapsto q_v^{-(n_1+\dots+n_{i-1})/2+(n_{i+1}+\dots+n_k)/2}T^i_{j_i},\] with $i$ and $j_i$ uniquely determined by $1\leq j\leq n$. Since the normalized Satake transform is compatible with local Langlands up to $q_v^{(n_{(i)}+1)/2}$, we get that $D_L = \prod_{i=1}^k  D_i(\chi_p^{n_{i+1}+\dots+n_k})$, where $\chi_p$ is the $p$-adic cyclotomic character and $D_i$ is the determinant associated to the locally symmetric space for $GL_{n_i}(F)$. By the induction hypothesis, the characteristic polynomial of complex conjugation on each $D_i$ satisfies the conditions of the theorem. Noting that the cyclotomic character is odd, we see that the sign of the contribution from the successive $D_i$ switches every time an odd-dimensional determinant contributes to the sum. Therefore, $D_L$ also satisfies the conditions of the theorem.   

\end{proof}
The above theorem determines the conjugacy class of complex conjugation in $\sigma_\psi$ for $p$ odd. To prove the analogous statement for characteristic 0 system of Hecke eigenvalues, we need to have a version of the above theorem for cohomology with $\mathcal{O}/\pi^m$-coefficients.
\begin{prop} \label{prop: mod p^n}
Let $\psi:\mathbb{T}_{F,S}\to \mathcal{O}/\pi^m\mathcal{O}$ be a system of Hecke eigenvalue factoring through the Hecke algebra quotient acting on $H^i_c(X_K^{\GL_n},\mathcal{O}/\pi^m)$. There exists an integer $N_0$ depending only on $G$ such that the determinant $D$ in Theorem ~\ref{thm: det for interior cohomology} specializes to $\tilde{\psi}=\psi$ mod $\pi^{\floor*{m/N_0}-\mathrm{ord}_\pi(4)}$, and $D_{\tilde{\psi}}(1-X\cdot c)=1-\mathrm{tr}(c)X+\cdots$ with $\mathrm{tr}{c}=0$ if $n$ is even and $\mathrm{tr}(c)=-1$ if $n$ is odd. 

\end{prop}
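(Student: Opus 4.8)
The plan is to run the $\mathcal{O}/\pi^m$-coefficient analogue of the proof of Theorem~\ref{thm: mod p}, by induction on $n$, keeping track of the $\pi$-adic precision lost at each step. Recall that $H^i_c(X_K^{\GL_n},\mathcal{O}/\pi^m)$ is an extension of the interior cohomology $H^i_!(X_K^{\GL_n},\mathcal{O}/\pi^m)$ by a quotient of the boundary cohomology $H^\bullet(X_K^{\mathrm{BS}}\setminus X_K,\mathcal{O}/\pi^m)$, so $\psi$ is supported either in interior cohomology or in the cohomology of the boundary of the Borel-Serre compactification; these two cases are treated separately, the compact case $n=1$ (where only the interior case occurs) being the base of the induction. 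In both cases Theorem~\ref{thm: det for interior cohomology}, applied with $\mathcal{O}/\pi^m$-coefficients to the Hecke algebra acting on $H^i_c$, provides determinants $\tilde D$ and $D$ valued in a quotient by a nilpotent ideal $I$ of nilpotency index bounded in terms of $G$; composing with $\psi$ kills $I$ only modulo the nilpotent ideal $\psi(I)\subseteq\mathcal{O}/\pi^m$, so $\tilde D$ and $D$ specialize to determinants $\tilde D_{\tilde\psi}$ and $D_{\tilde\psi}$ valued in $\mathcal{O}/\pi^{m_1}$ with $m_1\ge\floor*{m/c_0(G)}$ and $\tilde\psi\equiv\psi$ modulo $\pi^{m_1}$.

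First I would do the interior case, the $\mathcal{O}/\pi^m$-refinement of Proposition~\ref{prop: interior cohomology}. By Chebotarev density the Frobenius identities of Theorem~\ref{thm: det for interior cohomology} force $\tilde D=\mathbf{1}\oplus D\oplus D^\vee$ as determinants; since $c$ has order $2$ one gets the polynomial identity
\[\tilde D_{\tilde\psi}(1-Xc)=(1-X)\,D_{\tilde\psi}(1-Xc)\,D^\vee_{\tilde\psi}(1-Xc)=(1-X)\,D_{\tilde\psi}(1-Xc)^2.\]
On the other hand $\tilde D$ is glued (Remark~\ref{rem: interpolating c}) out of the determinants on the $\mathbb{T}_{K,k}^{G_0}$, and for $k>n$ Lemma~\ref{lem: classical cusp form}, together with the above factorization, identifies the characteristic polynomial of $c$ at each classical weight as $(1-X)^{c_n}(1+X)^{d_n}$, where $\{c_n,d_n\}=\{n,n+1\}$ and $c_n$ is the odd member (the $(1-X)$-exponent is odd because the factorization shows it equals $1$ plus twice the multiplicity of $1$ in $D$). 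Since these Hecke algebras act semisimply on classical cusp forms and the classical weights control the $p$-adic family, this identity propagates and descends to $\tilde D_{\tilde\psi}$. Extracting from it and the displayed square the characteristic polynomial of $c$ on the $n$-dimensional $D_{\tilde\psi}$ requires halving exponents, which over $\mathcal{O}/\pi^{m_1}$ works only up to a loss of $\ord_\pi(\disc(X^2-1))=\ord_\pi(4)$ --- the eigenvalues of $c$ being roots of $X^2-1$; after this loss, $\mathrm{tr}(c)$ on $D_{\tilde\psi}$ is determined over $\mathcal{O}/\pi^{m_1-\ord_\pi(4)}$, and reading off parities it is $0$ if $n$ is even and $-1$ if $n$ is odd.

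Next I would do the boundary case, the $\mathcal{O}/\pi^m$-refinement of the inductive step of Theorem~\ref{thm: mod p}. By Proposition~\ref{prop: cohomology of BS boundary} and the remark following it, $\psi$ is supported, after twisting the Hecke action via the unnormalized Satake transform of Lemma~\ref{lem: unnormalized Satake}, in $H^j_c(X^L_{K^L_f},\mathcal{O}/\pi^{m_1})$ for a proper rational Levi $L=\prod_{i=1}^k\mathrm{Res}_{F/\mathbb{Q}}\GL_{n_i}$; the Kunneth formula and the Hochschild-Serre argument of Section~\ref{boundary} then yield systems $\psi_i$ for $\GL_{n_i}$ with $n_i<n$, each factoring through the Hecke quotient on some $H^{a_i}_c(X^{\GL_{n_i}}_{K'},\mathcal{O}/\pi^{m_2})$ with $m_2\ge\floor*{m_1/c_1(G)}$. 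Matching Frobenii and applying Chebotarev gives $D_{\tilde\psi}\cong\prod_{i=1}^k D_{i,\tilde\psi_i}\bigl(\chi_p^{\,n_{i+1}+\dots+n_k}\bigr)$, where $\chi_p$ is the (odd) cyclotomic character and $D_i$ is the determinant for $\psi_i$ furnished by the inductive hypothesis; hence $\mathrm{tr}(c)$ on $D_{\tilde\psi}$ equals $-\sum_{i:\,n_i\ \mathrm{odd}}(-1)^{n_{i+1}+\dots+n_k}$ over $\mathcal{O}/\pi^{m_2-\ord_\pi(4)}$, and the elementary sign-switching count already carried out in the proof of Theorem~\ref{thm: mod p} evaluates this sum to $0$ when $n$ is even and to $-1$ when $n$ is odd. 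No new halving occurs here --- twisting by $\chi_p$ and summing over blocks are exact --- so the single $\ord_\pi(4)$ is the one inherited at the leaves of the recursion; as each of the finitely many reduction steps (specialization modulo $I$; the Leray, Kunneth and Hochschild-Serre spectral sequences; and descending along a boundary filtration of length bounded in terms of $G$) shrinks the precision exponent only by a multiplicative constant depending on $G$, and the recursion has depth at most $n$, the compounded bound is $\floor*{m/N_0}-\ord_\pi(4)$ for a suitable $N_0=N_0(G)$.

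The hard part is precisely this bookkeeping: one must check that every passage to graded pieces of a filtration or a spectral sequence, and every specialization through a nilpotent Hecke ideal, shrinks the exponent only by a multiplicative constant independent of $m$, and that the additive loss $\ord_\pi(4)$ is incurred once and not compounded along the $n$ levels of the recursion. A secondary point, invisible for $p$ odd but genuine for $p=2$, is that the sign in Lemma~\ref{lem: classical cusp form} need not be constant on a mod-$2$ packet of classical forms, so one cannot propagate a single polynomial identity; instead one propagates the weaker fact that the self-duality $\tilde D=\mathbf{1}\oplus D\oplus D^\vee$ forces the $(1-X)$-exponent to be the odd member of $\{n,n+1\}$, and one then accepts the $\ord_\pi(4)$ loss when separating the $\pm1$-eigenspaces of $c$.
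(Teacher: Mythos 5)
Your proposal follows the right skeleton (induction on $n$, interior versus boundary, a single $\ord_\pi(4)$ loss, a multiplicative $N_0$ loss from the nilpotent ideal), but it is organised around a more demanding object than the paper uses, and in one place the logic is circular. You try to propagate the full characteristic polynomial $(1-X)^{c_n}(1+X)^{d_n}$ of $c$ from the classical level to $\tilde D_{\tilde\psi}$. Lemma~\ref{lem: classical cusp form} alone does not determine $c_n$: it leaves the sign of $\mathrm{tr}(\sigma_x)(c)=\pm 1$ undetermined, and that sign may vary with the classical point. Your justification that ``$c_n$ is the odd member (the $(1-X)$-exponent is odd because the factorization shows it equals $1$ plus twice the multiplicity of $1$ in $D$)'' invokes the decomposition $\tilde D=\mathbf{1}\oplus D\oplus D^\vee$, which is a statement about the glued determinant, not about the individual classical $(2n+1)$-dimensional representations of Lemma~\ref{lem: classical cusp form}, so it cannot be used as the classical input that gets propagated. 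Moreover, over $\mathcal{O}/\pi^{m_1}$, speaking of the ``multiplicity of $1$'' in $D_{\tilde\psi}(c)$ is not literally meaningful until after you have already extracted the desired precision; this is the circularity.

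The paper avoids all of this by only propagating a single scalar identity: from Lemma~\ref{lem: classical cusp form} one has $\mathrm{tr}_{\tilde D}(c)^2=1$ at every classical point, and by Remark~\ref{rem: interpolating c} this identity, and hence (using $\tilde D(1-Xc)=(1-X)D(1-Xc)^2$, so $\mathrm{tr}_{\tilde D}(c)=1+2\mathrm{tr}_D(c)$) the identity $(2\mathrm{tr}_D(c)+1)^2=1$, holds on the Hecke algebra for interior cohomology, and then by Proposition~\ref{prop: cohomology of BS boundary} and induction on $n$ on all of $H^i_c$. Specializing gives $4\mathrm{tr}(c)(\mathrm{tr}(c)+1)=0$ in $\mathcal{O}/\pi^{\floor*{m/N_0}}$, and the parity count of Theorem~\ref{thm: mod p} (applied to the mod-$\pi$ reduction of $\tilde\psi$) says that one of $\mathrm{tr}(c)$, $\mathrm{tr}(c)+1$ is a unit; dividing out the unit and the $4$ costs exactly $\ord_\pi(4)$. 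So the $\ord_\pi(4)$ loss does appear once, as you guessed, but its source is the factor of $4$ in the identity, not a discriminant/square-root extraction as in your sketch. Your precision bookkeeping and the observation that only a multiplicative $N_0$ is lost along the boundary recursion are consistent with the paper, but if you want to salvage the argument you should replace the characteristic-polynomial propagation with the trace-squared identity: it is weaker, but it is the thing that actually propagates uniformly, and combined with the factorization after gluing it suffices.
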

\begin{proof}
The existence of the specialization to $\tilde{\psi}$ mod $\pi^{\floor*{m/N_0}}$ follows from the fact that the image $\psi(I)$ is nilpotent with nilpotency index bounded by that of the ideal $I$. Lemma ~\ref{lem: classical cusp form} shows that the identity 
\[(2\mathrm{tr}(\sigma_\psi)(c)+1)^2 =1\]
holds for the determinant on interior cohomology. By Proposition ~\ref{prop: cohomology of BS boundary} and an induction on $n$, the identity holds for the determinant on the Borel-Serre boundary of $X^{G}_K$, hence it holds for the determinant on $H^i_c(X_K^{G},\mathcal{O}/\pi^m)$.
Specializing via $\tilde{\psi}$ gives the identity $4\mathrm{tr}(c)(\mathrm{tr}(c)+1)=0$ in $\mathcal{O}/\pi^{\floor*{m/N_0}}$. By the same argument as in Theorem ~\ref{thm: mod p} either $\mathrm{tr}(c)$ or $\mathrm{tr}(c)+1$ is a unit, depending on the parity of $n$. 
\end{proof}
\begin{cor}
Let $\psi$ the system of Hecke eigenvalues of a regular algebraic cusp forms on $\GL_n/F$, with associated (p-adic) Galois representation $\sigma_\psi$. Then $\mathrm{tr}\sigma_\psi(c)\in\{0,\pm 1\}$. 
\end{cor}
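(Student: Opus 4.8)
The plan is to deduce the Corollary from Proposition~\ref{prop: mod p^n} by a limiting argument over $m$, exploiting the fact that a characteristic-$0$ system of Hecke eigenvalues reduces compatibly to systems of Hecke eigenvalues mod $\pi^m$ for every $m$. Recall from the discussion in the introduction that if $\pi$ is a regular algebraic cuspidal automorphic representation of $\GL_n/F$, then (some twist of) the associated $\pi'$ contributes to $H^i(X_K, \cM_{\xi,K})$ for suitable $\xi$, $i$, $K$, and hence — by the Hochschild–Serre argument recalled there, together with its mod $p^m$ refinement — the reduction of $\psi$ mod $\pi^m$ occurs in $H^i_c(X_{K_m}, \cO/\pi^m)$ for a suitable level $K_m$ (possibly shrinking with $m$) and some index $i$ bounded independently of $m$ since cohomological degrees for $X_K$ are bounded by $\dim X_K$.

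First I would fix, for each $m \geq 1$, the system of Hecke eigenvalues $\psi_m: \mathbb{T}_{F,S} \to \cO/\pi^m$ obtained by reducing the characteristic-$0$ eigensystem $\psi$ attached to $\pi$, enlarging $\cO$ once and for all so that the $n$-dimensional Galois representation $\sigma_\psi$ and all relevant Frobenius traces are defined over $\cO$. Since $\psi_m$ factors through the Hecke algebra acting on $H^i_c(X_{K_m}^{\GL_n}, \cO/\pi^m)$, Proposition~\ref{prop: mod p^n} applies: there is an integer $N_0$ depending only on $G = \mathrm{Res}_{F/\Q}\GL_n$ (in particular \emph{independent of $m$}) so that the determinant $D$ of Theorem~\ref{thm: det for interior cohomology} specializes via $\psi_m$ to a determinant $D_{\tilde\psi_m}$ over $\cO/\pi^{\lfloor m/N_0\rfloor - \ord_\pi(4)}$, with $\tr(c) = 0$ if $n$ is even and $\tr(c) = -1$ if $n$ is odd in that ring. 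The key point is that this specialized determinant must agree with the reduction mod $\pi^{\lfloor m/N_0\rfloor - \ord_\pi(4)}$ of the characteristic polynomial of $c$ under $\sigma_\psi$ itself: both are built from the same system of Hecke eigenvalues via local–global compatibility at the unramified places, and a continuous determinant of the profinite group $G_{F,S}$ is determined by its values on a dense set, hence by the Chebotarev-dense set of unramified Frobenii, where the two agree by construction.

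Then I would pass to the limit. As $m \to \infty$, the quantity $\lfloor m/N_0 \rfloor - \ord_\pi(4) \to \infty$, so the congruence $D_{\sigma_\psi}(1 - X\cdot c) = 1 - \tr(\sigma_\psi(c))X + \cdots$ with $\tr(\sigma_\psi(c)) \equiv 0$ (resp. $\equiv -1$) mod $\pi^{\lfloor m/N_0\rfloor - \ord_\pi(4)}$ holds for all $m$, forcing $\tr(\sigma_\psi(c)) = 0$ exactly when $n$ is even and $\tr(\sigma_\psi(c)) = -1$ when $n$ is odd, as an honest identity in $\cO$ (indeed in $\Z$). In particular $\tr(\sigma_\psi(c)) \in \{0, -1\} \subset \{0, \pm 1\}$, which is the assertion. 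The only subtlety — and I expect this to be the main obstacle to write carefully rather than a deep difficulty — is bookkeeping the levels $K_m$ and cohomological degrees $i$: one must ensure that shrinking the level at $p$ to accommodate larger $m$ does not interfere with the hypotheses of Proposition~\ref{prop: mod p^n}, and that the Hochschild–Serre reduction preserving occurrence in $H^i_c$ with $\cO/\pi^m$-coefficients is available (the mod $p^m$ version alluded to just before Theorem~\ref{thm: torsion}); since $N_0$ and the relevant nilpotency indices depend only on $G$, none of this spoils the limit. Taking the cases $n$ even and $n$ odd separately and comparing with Theorem~\ref{thm: cuspidal automorphic} then also recovers $\tr\sigma_\psi(c) = \pm 1$ in the odd case, completing the circle of implications.
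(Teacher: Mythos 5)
Your proposal matches the paper's proof in substance: reduce the characteristic-zero eigensystem modulo $\pi^m$, apply Proposition~\ref{prop: mod p^n}, and let $m\to\infty$. The paper avoids the level-bookkeeping you flag as the ``only subtlety'' by first reducing, once and for all, to the case where $\psi$ itself occurs in $H^i_c(X_K^{G},\mathcal{O})$ with integral coefficients at a fixed level $K$ and degree $i$; then $\psi\bmod\pi^m$ factors through the Hecke algebra on $H^i_c(X_K^G,\mathcal{O}/\pi^m)$ at the same $K$ and $i$ for all $m$, so no shrinking of levels is needed (and the closing appeal to Theorem~\ref{thm: cuspidal automorphic} should be dropped, since that theorem is deduced from this corollary).
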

\begin{proof}
It suffices to compute $\mathrm{tr}\sigma_\psi(c)$ in the case $\psi$ occurs in $H^i_c(X_K^{G},\mathcal{O})$, where we enlarge $\mathcal{O}$ to define $\sigma_\psi$ over it. Applying the above Proposition to $\psi$ mod $\pi^m$ gives $\mathrm{tr}\sigma_\psi(c)$ mod $\pi^{\floor*{m/N_0}-\mathrm{ord}_\pi(4)} \in \{0, -1\}$. (Since we're looking at $X_K^G$, we know that the trace lies in $\{0,-1\}$ rather than $\{0,\pm 1\}$.) Letting $m \to \infty$ gives the result.
\end{proof}
\begin{rem}
The corollary completely determines the conjugacy class of $\sigma_\psi(c)$. In general, for systems of Hecke eigenvalues mod $\pi^m$, one can refine Proposition ~\ref{prop: mod p^n} to compute the entire characteristic polynomial of $c$ modulo a smaller power of $\pi$.
\end{rem}
\bibliographystyle{amsalpha}
\bibliography{Complexconjugation} 
\end{document}